\documentclass{svmult}

\bibliographystyle{spmpsci}
\usepackage{amsmath, amsthm, amssymb, amsfonts, mathtools, enumerate}
\DeclareMathOperator{\md}{md}
\newcommand{\R}{\mathbb R}
\newcommand{\II}{\mathrm{I\!I}}
\DeclareMathOperator{\vol}{vol}

\DeclareMathOperator{\lip}{Lip}
\DeclareMathOperator{\m}{m}
\DeclareMathOperator{\ric}{ric}
\DeclareMathOperator{\diam}{diam}
\newcommand{\N}{\mathbb{N}}
\begin{document}
\title*{Glued spaces and lower curvature bounds}
\author{Christian Ketterer}
\institute{Christian Ketterer, 
Department of Mathematics \& Statistics, Logic House,
South Campus,
Maynooth University,
Ireland, \email{{\tt christian.kettterer@mu.ie}}\keywords{glued spaces; Ricci lower curvature bounds; Alexandrov spaces; curvature dimension condition}}

\maketitle
\abstract{
We survey  theorems and provide conjectures on gluing constructions under lower curvature bounds in smooth and non-smooth context. Focusing on synthetic lower Ricci curvature bounds we consider Riemannian manifolds, weighted Riemannian manifolds, Alexandrov spaces, collapsed and non-collapsed $RCD$ spaces, and sub-Riemannian spaces. }
\section{Introduction}
\paragraph{\bf Setup}

Let $(M_0, g_0)$, $(M_1, g_1)$ be compact Riemannian manifolds with boundary, and let $\mathcal I: \partial M_0 \rightarrow \partial M_1$ be a Riemannian isometry with respect to the restrictions of $g_0$ and $g_1$ to $\partial M_0$ and $\partial M_1$ respectively.  

 The topological glued space  $\hat M$ of $M_0$, $M_1$ and $\mathcal I$  is built as  quotient space by identifying  each $x\in \partial M_0$ with its image $\mathcal I(x) \in \partial  M_1$.   The space $\hat M$ is  a topological  manifold without boundary and admits a  smooth structure such that $M_i\backslash \partial M_i$ are smooth submanifolds \cite{hirschdt}. We  denote the subset $\partial M_0\simeq \partial M_1$ in $\hat M$ with $Y$.
%\begin{example} A connected sum between two closed Riemannian manifolds without boundary is obtained by deleting a ball in each manifold and gluing them together along the boundary spheres via some identifying diffeomorphism. 
%\end{example}
% \cite{hirsch}.

Since $\mathcal I$ is a Riemannian isometry there exists a $C^0$ Riemannian metric $g$ on $\hat M$ such that $g|_{M_i}=g_{i}$ for $i=0$ and $i=1$. A length structure on $\hat M$ is defined via 
$$L^g: \gamma\mapsto \int_a^b |\gamma'(t)|_gdt $$
for curves $\gamma\in C^0([a,b], \hat M)$ such that there exist points $a= t_0\leq \dots \leq t_n= b$ with $C^1([t_{k-1}, t_k], M_i)$ for $i=0$ or $i=1$, and $k=1, \dots, n$. The distance that is induced by this length structure is denoted with $\hat d$.

Alternatively, one can first define a distance $d$ with values in $[0, \infty]$ on the disjoint union $M_0\dot \cup M_1$ via $d(x,y)=d_{g_{i}}(x,y)$ if $x,y\in M_i$ for $i\in \{0,1\}$, and $d(x,y)=\infty$ if $x\in M_0$ and $y\in M_1$. $d_{g_{i}}$ is the intrinsic distance w.r.t. $g_i$.
Then $\forall x,y\in M_0\dot \cup M_1$
\begin{eqnarray}
\hat d(x,y) = \inf  \sum_{i=0}^{k} d(p_{i},q_i) 
\end{eqnarray}
where the infimum runs over all collections of tuples 
 $\{(p_i,q_{i})\}_{i=0,\dots ,k}\subset  M_0\times  M_1$ for some $k\in \N$
 such that $q_i= \mathcal I( p_{i+1})$ or $p_{i+1}= \mathcal I(q_i)$, for all $i=0,\dots, k-1$ and $x=p_0, y=q_k$. One can show that $\hat d$ is a finite  metric.
The metric glued space between $M_0$ and $M_1$ w.r.t. $\mathcal I:\partial M_0 \rightarrow \partial M_1$ is the metric space defined as 
$$M_0\cup_{\mathcal I} M_1:=(\hat M, \hat{d}).$$
The distance $\hat d$ restricted to $M_0$ (or to $M_1$) can differ from $d_{g_0}$ ($d_{g_1}$). To see this, for instance, one can consider the glued space of a cylinder $\mathbb S^1\times [0,1]=M_0$ and two disjoint discs $D(1)\dot \cup D(1)=: M_1$ where $D(1)$ is the closed unit disk in $\R^2$. 
\begin{remark}
The second route to construct $\hat d$ is  purely metric  and a special case of   a more general method to "glue points together" in a family of metric spaces \cite[Section 3.1.2]{bbi}.
\end{remark}

In this short note we present   recent as well as  old results about  spaces that are obtained from gluing together pairs of spaces with lower curvature bounds along their isometric boundaries. Especially we focus on the connection between glued spaces and synthetic lower Ricci curvature bounds. Along the way we  formulate conjectures about the preservation of synthetic Ricci  curvature bounds under gluing constructions.

\paragraph{\bf Glued spaces and lower Ricci curvature bounds in smooth context}

The sectional curvature of a $2$-dimensional plane spanned by orthonormal vectors $v, e\in T_pM_i$ for $p\in M_i$ is $\sec_{M_i}(v,e)= \langle \mbox{Rm}(v,e)e, v\rangle$.  Here $\mbox{Rm}$ is the Riemannian curvature tensor.

The Ricci curvature $\ric_{M_i}$ of $M_i$ is the $2$-tensor defined by tracing the Riemannian curvature tensor
$$\ric_{M_i}(v,w)= \sum_{\alpha=1}^n \langle \mbox{Rm}(v,e_\alpha)e_\alpha, w\rangle$$
where $v,w\in T_pM$, $p\in M$ and $(e_\alpha)_{\alpha=1, \dots, n}$ is an orthonormal basis of $T_pM$. The Ricci curvature of $M_i$ is bounded from below by $K$ if 
$$\ric_{M_i}(v,v) \geq K g_{i}(v,v)  \ \ \forall v\in TM_i.$$
In the following we just write $\ric_{M_i}\geq K$.

The second fundamental  form  $\II_{\partial M_i}$ of $\partial M_i$ is defined as 
$$\II_{\partial M_i}(v,w)=- \langle \nabla^{M_i}_v \nu_i, w\rangle$$
where $v,w\in T_p\partial M_i$, $\nu_i$ is the normal  inward pointing unit vector field at $\partial M_i$ and $\nabla^{M_i}$ is the Levi-Civita connection of $M_i$. In particular every geodesic ball contained in a hemisphere of the round sphere has positive definite second fundamental form.  

The mean curvature $H_{\partial M_i}$ of $\partial M_i$ is defined as  the trace of the second fundamental form
$$H_{\partial M_i} = \sum_{\alpha=1}^{n-1} \II_{\partial M_i} (e_\alpha, e_\alpha)$$
where $(e_\alpha)_{\alpha=1, \dots, n-1}$ is an orthonormal basis of $T_p\partial M_i$. 
Perelman proved the following.
\begin{theorem}[\cite{perelmanlarge}]\label{th:perelman}
Let $M_0, M_1$ be as above,  assume $\ric_{M_i}>0$, $i=0,1$, and 
$$\mathrm{I\!I}_{\partial M_0} + \mathrm{I\!I}_{\partial M_1}\geq 0.$$
Then $M_0\cup_{\mathcal I}M_1$ can be smoothed near $\partial M_0\simeq \partial M_1$ to produce a Riemannian metric  with positive Ricci curvature on $\hat M$. 
\end{theorem}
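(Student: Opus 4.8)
The plan is to reduce the problem to a one-dimensional smoothing in a collar of $Y$ and to exploit that the boundary condition makes the (distributional) Ricci curvature nonnegative there. First I would fix a two-sided tubular neighborhood of $Y$ and introduce Fermi coordinates $(t,x)$ based on $Y$, with $t$ the signed distance, $M_0=\{t\le 0\}$ and $M_1=\{t\ge 0\}$; in these coordinates the $C^0$ metric reads $g=dt^2+h_t$, where $t\mapsto h_t$ is a continuous, piecewise smooth family of metrics on $Y$, smooth for $t<0$ and for $t>0$ but with a corner at $t=0$. Away from the collar each $M_i$ already carries $\ric_{M_i}\ge K_0>0$ by compactness, so it suffices to modify $h_t$ for $|t|\le\delta$. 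The one-sided normal derivatives at $t=0$ are governed by the second fundamental forms, and with the inward conventions one computes $\partial_t h_t|_{0^+}-\partial_t h_t|_{0^-}=-2(\II_{\partial M_0}+\II_{\partial M_1})$; the hypothesis thus says precisely that this jump is negative semidefinite, i.e. the distributional $\partial_t^2 h_t$ has a negative semidefinite singular part $-2(\II_{\partial M_0}+\II_{\partial M_1})\,\delta_{\{t=0\}}$.

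Next I would recall the Ricci formulas for $g=dt^2+h_t$: abbreviating $h'=\partial_t h_t$, one has $\ric(\partial_t,\partial_t)=-\tfrac12\operatorname{tr}(h^{-1}h'')+\tfrac14\operatorname{tr}\big((h^{-1}h')^2\big)$, the mixed components are Codazzi-type expressions linear in $h'$ and its tangential covariant derivatives, and the tangential components satisfy $\ric(X,X)=\ric^{h_t}(X,X)-\tfrac12\,h''(X,X)+Q(h')$ with $Q$ quadratic in $h'$. The key point is that both $\ric(\partial_t,\partial_t)$ and the tangential part feel $h''$ with the favorable sign. I then replace $h_t$ on $|t|\le\delta$ by $\tilde h_t$ whose slope interpolates the jump, $\tilde h'=h'|_{0^-}+\Psi\,[h']$ with $[h']=-2(\II_{\partial M_0}+\II_{\partial M_1})$ and $\Psi$ increasing from $0$ to $1$ across the collar, adjusted by an $O(\delta)$ term so that $\tilde h_t$ agrees with the unmodified metric to high order at $|t|=\delta$. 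Then $\tilde h''=\Psi'\,[h']+O(1)$ with $\Psi'\ge 0$ of size $\delta^{-1}$, hence $-\tfrac12\tilde h''=\Psi'\,(\II_{\partial M_0}+\II_{\partial M_1})+O(1)$ is positive semidefinite up to $O(1)$ and of size $\delta^{-1}$. Consequently the tangential Ricci gains a positive semidefinite term of size $\delta^{-1}$, $\ric(\partial_t,\partial_t)$ gains $\Psi'\operatorname{tr}\big(h^{-1}(\II_{\partial M_0}+\II_{\partial M_1})\big)$, and the mixed components remain $O(1)$.

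The main obstacle is to upgrade these diagonal gains to genuine positive definiteness of the full tensor $\ric(\tilde g)$, uniformly across the degeneracy locus $\{\II_{\partial M_0}+\II_{\partial M_1}=0\}$. Where $\II_{\partial M_0}+\II_{\partial M_1}$ is bounded below by a positive multiple of the metric, $\ric(\partial_t,\partial_t)$ grows like $\delta^{-1}$ and dominates the $O(1)$ mixed terms by a Schur-complement estimate, so positivity is automatic for $\delta$ small. Where the boundary condition degenerates the $\delta^{-1}$ boost vanishes, but there the corner of $h_t$ is correspondingly small, the modification is a small perturbation of the original metric, and positivity survives because $\ric_{M_i}\ge K_0>0$ furnishes a fixed reservoir that absorbs the $O(1)$ errors coming from the interpolation and from the quadratic term $Q(h')$. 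Making this dichotomy quantitative and uniform — controlling the off-diagonal components and the matching errors by a single choice of $\delta$, independent of the point of $Y$ — is the technical heart of the argument; once it is in place, taking $\delta$ sufficiently small yields a smooth metric on $\hat M$, equal to $g$ outside the collar, with $\ric>0$.
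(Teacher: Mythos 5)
The paper itself does not prove this theorem: it is quoted from Perelman, and the survey only remarks (after Theorem~\ref{th:basicglue}) that Kosovski's collar construction, as exploited by Schlichting, gives a smoothing family $g^\delta$ with $\ric_{g^\delta}\geq K-\epsilon(\delta)$. Your strategy --- Fermi coordinates $g=dt^2+h_t$, jump $[h']=-2(\II_{\partial M_0}+\II_{\partial M_1})$, interpolation of the slope across a collar of width $\delta$ producing a gain $\Psi'(\II_{\partial M_0}+\II_{\partial M_1})\sim\delta^{-1}$ in the tangential and normal Ricci --- is exactly that route, and the setup (sign of the jump, structure of the Ricci components of $dt^2+h_t$) is correct.

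However, the step you yourself identify as the technical heart contains a genuine gap. Your dichotomy is over \emph{points} of $Y$: either $\II_{\partial M_0}+\II_{\partial M_1}\geq c\,h>0$ there, or it degenerates, in which case you claim ``the corner of $h_t$ is correspondingly small'' and the $O(1)$ errors are absorbed by the reservoir $K_0$. This is false: $\II_{\partial M_0}+\II_{\partial M_1}$ is a quadratic form, and at a single point it can vanish in one direction $X$ while being large in others. In such a null direction the $\delta^{-1}$ boost to $\ric(X,X)$ is zero, yet the corner is \emph{not} small, the interpolated slope $\tilde h'$ differs from either one-sided slope by an $O(1)$ amount, and the resulting $O(1)$ change in the terms quadratic in $h'$ has no reason to be smaller than $K_0$. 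So neither branch of your dichotomy applies, and the ``reservoir'' argument does not close.

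What rescues the argument is a finer, direction-by-direction estimate in place of the pointwise dichotomy. Writing $S_i=\tfrac12 h^{-1}h'|_{0^\pm}$ and $A=S_0-S_1=h^{-1}(\II_{\partial M_0}+\II_{\partial M_1})\geq 0$, the interpolation $\tilde S=(1-\Psi)S_0+\Psi S_1$ gives
$\ric_{\tilde g}=(1-\Psi)\ric_{g_0^{\mathrm{ext}}}+\Psi\,\ric_{g_1^{\mathrm{ext}}}+\Psi'\,P-\Psi(1-\Psi)\,Q(A)+O(\delta)$,
where $P\geq0$ is the gain tensor and $Q(A)$ is the quadratic form obtained by evaluating the $S$-quadratic part of the Ricci formula at $A$. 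The convex combination is $\geq (K_0-O(\delta))|v|^2$ as a full tensor (this also disposes of the mixed components, which are Codazzi expressions linear in $S$ and its \emph{tangential} derivatives, hence pick up no $\Psi'$ and interpolate linearly). The crucial point your proposal is missing is that, because $A\geq0$, the loss satisfies $|Q(A)(X,X)|\leq C\,\langle AX,X\rangle$ --- it is controlled by the \emph{same} quantity $(\II_{\partial M_0}+\II_{\partial M_1})(X,X)$ that multiplies $\Psi'$ in the gain, so it is dominated once $\delta$ is small, uniformly including the kernel directions where both gain and loss vanish. (Perelman's original proof avoids the issue differently, by first deforming one side so that the second fundamental forms cancel exactly, reducing to a $C^1$ gluing.) Without one of these two inputs, your proof does not go through.
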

%Here $\mathrm{I\!I}_{\partial M_i}$ denotes the second fundamental form of $\partial M_i$. 
This theorem had important applications in Riemannian geometry, especially for the construction of new examples of Riemannian metrics with positive Ricci curvature.

%It is a natural question what kind of curvature boundes are admitted under connected sums. 
A famous result of Gromov shows that all manifolds of a given dimension with positive sectional curvature are subject to a universal bound on the sum of their Betti numbers. Hence, arbitrary connected sums of Riemannian manifolds with positive sectional curvature cannot admit a metric of positive sectional curvature ({Cheeger \cite{cheeger_examples} showed that the connected sum of a pair of
compact rank-one symmetric spaces does admit  a metric with non-negative sectional curvature}\footnote{The reference \cite{cheeger_examples} was added after the publication of the survey article.}).  Moreover,  the Bonnet-Myers theorem shows that the connected sum of closed, non-simply connected manifolds with a positve Ricci-metric doesn't  allow a positive Ricci-metric. 

However, based on Theorem \ref{th:perelman}, Perelman was able to construct  metrics with positive Ricci curvature and  a given lower volume bound on arbitrary connected sums of the complex projective plane (before Perelman's construction Sha and Yang constructed examples without a lower volume bound \cite{shayang}). This gives examples of Riemannian manifolds with positive Ricci curvature and arbitrarily large Betti numbers. 

The gluing construction was  used in \cite{burdick, bww, burdick2} to provide  examples in every dimension (see \cite{cheeger_examples} for metrics of positive Ricci curvature on the Kervair sphere). 
\section{Alexandrov spaces}\label{intro:Alex}

For a metric space $(X,d)$ let 
 $\L$ be the induced length functional for  continuous curves $\gamma: [0,l]\rightarrow X$
$$\L(\gamma) =\sup \sum_{i=1}^Nd(\gamma(t_{i-1}), \gamma(t_i))$$
where $0=t_0\leq \dots \leq t_N=l$ and $N\in \N$.  A curve $\gamma: [a,b]\rightarrow X$  is called a geodesic if $\L(\gamma)=d(\gamma(a), \gamma(b))$. If all points $x,y\in X$ are connected by a geodesic we call $X$ a geodesic metric space.

Let $\md_\kappa:[0,\infty)\rightarrow [0,\infty)$ be the solution of 
\begin{eqnarray}
v''+ \kappa v=1 \ \ \ v(0)=0 \ \ \&\ \ v'(0)=0.
\end{eqnarray} 
Let $\pi_\kappa\in \left\{\frac{\pi}{\sqrt{\kappa}}, \infty\right\}$ be the diameter of the $2$-dimensional simply connected space  $\mathbb S^2_\kappa$ of constant curvature $\kappa$.
\begin{definition}
A complete geodesic metric space $(X,d)$  has curvature bounded  below by $\kappa\in\mathbb{R}$ in the sense of Alexandrov   if for any  constant speed geodesic $\gamma : [0,\L(\gamma)]\to X$ {and any $y\in X$} 
such that 
$
d(y,\gamma(0))+\L(\gamma)+d(\gamma(\L(\gamma)),y)<2\pi_\kappa
$
it holds that 
$$
\left[\md_{\kappa}(d_y\circ\gamma)\right]''+\md_{\kappa}(d_y\circ\gamma)\leq 1.
$$
If $(X,d)$  is finite dimensional and has curvature bounded from below by $\kappa\in \R$ in the sense of Alexandrov, we say that $(X,d)$ is an Alexandrov space.
\end{definition}

\begin{remark}
Alexandrov spaces are nonbranching: if $\gamma, \tilde \gamma: (0,1) \rightarrow X$ are constant speed geodesics with $\gamma|_{(0,\epsilon)}= \tilde \gamma|_{(0, \epsilon)}$ for some $\epsilon>0$, then $\gamma=\tilde \gamma$. 
\end{remark}
\begin{remark}
 The set of finite dimensional  Alexandrov spaces with curvature bounded from below by $\kappa$ is closed w.r.t. Gromov-Hausdorff convergence.
\end{remark}
\begin{remark} A Riemannian manifold $M$ is an Alexandrov space of curvature $\geq k$ if and only if $\mbox{sec}_M\geq k$ and $\mathrm{I\!I}_{\partial M}\geq 0$. 
\end{remark}
\begin{definition}
Given a metric space $X$ and $p\in X$ we say that a metric space $X_p$ is a Gromov-Hausdorff tangent cone at $p$ if there exists $r_i\rightarrow 0$ such that $(X, r_i^{-1} d, p)\overset{\scriptscriptstyle GH}{\rightarrow} X_p$.
\end{definition}
If  $X$ is an $n$-dimensional Alexandrov space
there exists a  unique Gromov-Hausdorff   tangent cone $X_p$ at every point $p\in X$ that coincides with the metric cone $C(\Sigma_pX)$ over   the space of directions $\Sigma_pX$ at $p$ equipped with the angle metric \cite{bbi}. It follows that 
$\Sigma_pX$ is an $(n-1)$-dimensional  Alexandrov space with curvature $\geq 1$. 
A point $p\in X$ is called a boundary point of $X$ if $\Sigma_pX$ contains a boundary point. 

If there are two Alexandrov spaces $X_0$ and $X_1$ with non-empty boundary such that there is a map $\mathcal I: \partial X_0 \rightarrow \partial X_1$, that is an isometry w.r.t. the induced intrinsic metrics, the second construction of $\hat d$ above on $X_0\dot{\cup} X_1$ makes sense and yields the metric glued space $X_0\cup_\mathcal{I} X_1$. If $X_0\simeq X_1=:X$ and $\mathcal I= \mbox{Id}_{\partial X}$ we call $X_0\cup_{\mathcal I} X_1:=\mbox{Dbl}(X)$ the  doubling of $X$.
\smallskip

For  2-dimensional Alexandrov spaces  A.D. Alexandrov proved that curvature $\geq k$ is preserved by the gluing construction. Perelman proved a  theorem for the doubling of higher dimensional Alexandrov spaces. 

\begin{theorem}[\cite{p}]
The doubling  $\mbox{Dbl}(X)$ of a finite dimensional Alexandrov space  $X$ with curvature $\geq k$ is a finite dimensional Alexandrov space with curvature $\geq k$  without boundary. 
\end{theorem}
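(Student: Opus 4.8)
The plan is to reduce the global comparison to a local statement near the gluing locus $Y$ and to verify it there by an ``unfolding'' argument based on the natural reflection involution. First, by the Toponogov globalization theorem it suffices to check the Alexandrov condition in a neighborhood of each point. Away from $Y$ the space $\mbox{Dbl}(X)$ is locally isometric to $X$, so the condition is inherited from $\mbox{curv}\ge k$ of $X$; thus the whole issue is concentrated at points of $Y$. I would set up the isometric involution $\iota:\mbox{Dbl}(X)\to\mbox{Dbl}(X)$ that exchanges the two copies $X_0,X_1$ and fixes $Y$ pointwise, and record the resulting distance formula: each copy $X_i$ embeds isometrically (a path crossing into the other copy can be reflected back by $\iota$ without changing its length), and for $p\in X_0$, $y\in X_1$ with corresponding point $\bar y\in X_0$ one has $\hat d(p,y)=\min_{z\in Y}\big(d_X(p,z)+d_X(z,\bar y)\big)$. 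In particular a minimizing geodesic crossing $Y$ at a point $z$ unfolds, under $\iota$ applied to its second half, to a minimizing geodesic of $X$ through the boundary point $z$.

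To verify the comparison, fix a constant speed geodesic $\gamma$ in $\mbox{Dbl}(X)$ and a point $y$, and study $f=\md_\kappa(d_y\circ\gamma)$. On each maximal subinterval where both $\gamma$ and the relevant minimizing geodesics from $y$ stay in a single copy, the inequality $f''+f\le 1$ holds because that portion is isometric to a configuration in $X$. The two curves can switch copies only at finitely many transition times, where $f$ may develop corners. The heart of the proof is to show that every such corner is concave, i.e. $f'(t_0^+)\le f'(t_0^-)$; together with the smooth inequality on the complementary intervals this yields the differential inequality in the barrier sense. I would establish concavity of the corners through the first variation formula: at a crossing time the one-sided derivatives are computed from the angles that $\gamma'(t_0^\pm)$ make with the direction to $y$ inside $\Sigma_{\gamma(t_0)}$, and the reflection symmetry across $Y$, combined with the triangle inequality for angles in the space of directions, forces the outgoing slope not to exceed the incoming one.

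The ``without boundary'' assertion I would obtain by simultaneous induction on $n=\dim X$, using the infinitesimal structure. At a point $p\in Y$ the tangent cone of $\mbox{Dbl}(X)$ is the doubling of the cone $C(\Sigma_pX)$ along the cone over $\partial\Sigma_pX$, and since doubling commutes with coning this equals $C\big(\mbox{Dbl}(\Sigma_pX)\big)$; hence $\Sigma_p\mbox{Dbl}(X)=\mbox{Dbl}(\Sigma_pX)$. As $\Sigma_pX$ is an $(n-1)$-dimensional Alexandrov space with curvature $\ge 1$, the inductive hypothesis shows $\mbox{Dbl}(\Sigma_pX)$ has curvature $\ge 1$ and, crucially, no boundary; therefore $p$ is not a boundary point of $\mbox{Dbl}(X)$. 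Interior points of each copy are non-boundary points since their spaces of directions are unchanged, so $\mbox{Dbl}(X)$ has empty boundary. The base cases $n\le 2$ are Alexandrov's classical gluing theorem for surfaces.

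I expect the main obstacle to be the corner-concavity step, that is, controlling $d_y\circ\gamma$ at the transition times. The difficulty is twofold: one must handle not only crossings of $\gamma$ through $Y$, but also the times at which the minimizing geodesic from $y$ to $\gamma(t)$ changes which copy it traverses, and one must ensure the first-variation comparison of one-sided slopes is strong enough to pass to the second-order barrier inequality. This is precisely where Perelman's careful analysis of the reflection across the extremal subset $Y$ is needed, and where the naive folding retraction $\mbox{Dbl}(X)\to X$, although $1$-Lipschitz, fails to give the bound directly.
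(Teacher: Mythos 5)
The paper states this theorem only as a cited result of Perelman and supplies no proof, so your proposal can only be measured against the known argument; measured that way, it has a genuine gap at its central step. Your decomposition covers two situations: subintervals on which $\gamma$ and the minimizing geodesics from $y$ stay in one copy (where the comparison is inherited from $X$), and finitely many ``transition times'' (where you claim concave corners). But the hard case is neither of these. If $y\in X_1$ and $\gamma(t)\in X_0\setminus Y$ for all $t$ in an interval, then for \emph{every} such $t$ the minimizing geodesic from $y$ to $\gamma(t)$ must cross $Y$, so
$$\hat d(y,\gamma(t))=\min_{z\in\partial X}\bigl(d_X(\bar y,z)+d_X(z,\gamma(t))\bigr),$$
and this configuration is \emph{not} isometric to one inside $X$: crossing is the generic state over a whole interval, not an isolated transition. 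Moreover your assertion that a crossing geodesic ``unfolds to a minimizing geodesic of $X$ through $z$'' is false. The folding map is $1$-Lipschitz, so the unfolded curve has length $\geq d_X(\bar y,\gamma(t))$, with equality only when some $X$-geodesic from $\bar y$ to $\gamma(t)$ already touches $\partial X$; for $X$ the flat unit disk with $y$ and $\gamma(t)$ the centers of the two copies, the unfolded curve is a loop of length $2$ while $d_X(\bar y,\gamma(t))=0$. Establishing the $\md_\kappa$-comparison for this broken distance function on a whole interval is precisely the content of the theorem, and your sketch does not address it.

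Two further points. First, the corner analysis as written is circular: the first variation formula and the triangle inequality for angles in $\Sigma_{\gamma(t_0)}\mbox{Dbl}(X)$ at a point of $Y$ presuppose that $\mbox{Dbl}(\Sigma_{\gamma(t_0)}X)$ is an Alexandrov space of curvature $\geq 1$, so the curvature statement (not only the ``no boundary'' statement) must be run as a simultaneous induction on dimension, with the doubled space of directions controlled by the inductive hypothesis; this is in fact how Perelman structures the argument. Second, the finiteness of the set of transition times is unjustified -- a geodesic may meet $Y$ in a complicated closed set -- so the ``finitely many concave corners plus smooth pieces'' reduction should be replaced by a pointwise upper-barrier argument at each $t_0$. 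Your induction for the boundaryless claim via $\Sigma_p\mbox{Dbl}(X)=\mbox{Dbl}(\Sigma_pX)$ is the right idea, but it rests on the same unproved comparison.
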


A general gluing theorem for finite dimensional Alexandrov spaces was proved by Petrunin.  

\begin{theorem}[\cite{petruningluing}]\label{th:petrunin}
Given two finite dimensional  Alexandrov spaces $X_0$ and $X_1$ with curvature bounded from below by $k$  and a map $\mathcal I: \partial X_0\rightarrow \partial X_1$ between the nonempty boundaries of $X_0$ and $X_1$, that is an isometry w.r.t. their intrinsic metrics, $X_0\cup_{\mathcal I} X_1$ is a finite dimensional Alexandrov space with curvature $\geq k$. 
\end{theorem}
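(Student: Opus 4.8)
The plan is to verify the defining comparison inequality $\left[\md_{\kappa}(d_y\circ\gamma)\right]''+\md_{\kappa}(d_y\circ\gamma)\leq 1$ for the glued space $M:=X_0\cup_{\mathcal I}X_1$. By the Toponogov globalization theorem the curvature condition is a local one for a complete geodesic space, so it suffices to establish it in a neighborhood of each point. Every point of the interior $X_i\setminus\partial X_i$ lies at positive $\hat d$-distance from the seam $Y=\partial X_0\simeq\partial X_1$, hence has a small neighborhood isometric to an open subset of the interior of $X_i$ (a round trip into the other piece is too expensive to shorten short paths), where the bound already holds. Thus the entire problem concentrates at points $p\in Y$. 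A geodesic of $M$ is a concatenation of geodesic arcs lying alternately in $X_0$ and $X_1$ and meeting $Y$, and the decisive question is whether $t\mapsto\md_{\kappa}(d_y(\gamma(t)))$ can develop a convex corner at a parameter where $\gamma$ crosses $Y$.

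The conceptual reason this cannot happen is that, in contrast to the smooth Theorem~\ref{th:perelman} where one must \emph{impose} $\II_{\partial M_0}+\II_{\partial M_1}\geq 0$, here that condition is automatic: the boundary of a finite dimensional Alexandrov space is convex from the inside in the synthetic sense, so both $\partial X_0$ and $\partial X_1$ contribute nonnegatively and their sum satisfies the gluing condition for free. I would make this precise infinitesimally. At $p\in Y$ the space of directions $\Sigma_pM$ is itself the metric gluing of the $(n-1)$-dimensional curvature-$\geq 1$ Alexandrov spaces $\Sigma_pX_0$ and $\Sigma_pX_1$ along their common boundary (the space of directions of $Y$). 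Arguing by induction on dimension, with the planar case $n\leq 2$ supplied by Alexandrov's classical gluing theorem quoted above, $\Sigma_pM$ has curvature $\geq 1$, so the tangent cone $T_pM=C(\Sigma_pM)$ is a genuine Alexandrov cone; this pins down the first-order geometry at the seam and, in particular, forbids the convex corner at the infinitesimal level.

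The remaining and decisive step is to integrate this infinitesimal information into the finite comparison, and here I would reduce to Perelman's doubling theorem, the one genuinely hard input, for which $\mathrm{Dbl}(X_0)$ and $\mathrm{Dbl}(X_1)$ already have curvature $\geq k$. The idea is to construct distance-nonincreasing (``short'') maps that relate $M$ near $Y$ to these doublings: one unfolds a broken geodesic of $M$ that crosses $Y$ by reflecting its successive arcs, matching the two sides through the boundary isometry $\mathcal I$, and compares the developed configuration inside a doubling with the original one in $M$. Because Alexandrov spaces are nonbranching, such a geodesic meets $Y$ in a controlled way and the reflection is well defined up to the isometric identification of the boundaries; the short-map property then transports the comparison inequality valid in the doubling back to $M$. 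Throughout I expect to need Petrunin's machinery of quasigeodesics and gradient curves to treat geodesics that run into, or along, the singular seam, where ordinary geodesics with prescribed directions need not exist.

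The step I expect to be the main obstacle is precisely this reduction to doubling in the absence of symmetry. Perelman's doubling argument exploits the isometric involution exchanging the two copies, whereas for $X_0\neq X_1$ there is no such reflection and a geodesic crossing $Y$ genuinely passes between two different spaces. Making the unfolding and short-map argument robust against this asymmetry — ensuring that reflecting an arc of $X_1$ into a copy of $X_0$ via $\mathcal I$ does not increase distances, and that several crossings of $Y$ can be handled simultaneously — is the crux, and is exactly the technical content that separates Petrunin's theorem from the doubling case.
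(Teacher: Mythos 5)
This is a survey: the paper states Theorem~\ref{th:petrunin} as a quoted result of Petrunin and contains no proof of it, so there is nothing in the text to compare your argument against line by line; I can only judge the proposal on its own terms. As a roadmap it is reasonable and matches the known architecture in outline (localize via globalization, observe that interior points are unproblematic, analyze the seam infinitesimally through spaces of directions by induction on dimension, and lean on Perelman's doubling theorem and Petrunin's quasigeodesics as the hard inputs). But it is not a proof: you yourself flag the decisive step --- passing from the infinitesimal picture at $Y$ to the finite comparison inequality --- as an ``expected obstacle,'' and the mechanism you propose for it does not work. There is no reason for the identification $\mathcal I$ to extend to a distance-nonincreasing map between neighborhoods of $\partial X_1$ in $X_1$ and of $\partial X_0$ in $X_0$ (the two collars can be geometrically very different), so ``reflecting an arc of $X_1$ into a copy of $X_0$ via $\mathcal I$'' has no canonical meaning and need not be short; consequently the comparison valid in $\mathrm{Dbl}(X_0)$ or $\mathrm{Dbl}(X_1)$ cannot simply be ``transported back'' to $X_0\cup_{\mathcal I}X_1$. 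This is precisely where Petrunin's actual argument does real work (controlling the angles a glued geodesic makes with $Y$ at a crossing and showing the development of $\md_\kappa(d_y\circ\gamma)$ has no convex kink there, using quasigeodesics and the extremality of the boundary), and that content is absent here.

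Two further gaps deserve naming. First, the induction step presupposes that $\Sigma_pM$ is the metric gluing of $\Sigma_pX_0$ and $\Sigma_{\mathcal I(p)}X_1$ along $\Sigma_p\partial X_0\simeq\Sigma_{\mathcal I(p)}\partial X_1$, and that the intrinsic isometry $\mathcal I$ of the boundaries induces an intrinsic isometry of these $(n-2)$-dimensional boundaries of the spaces of directions. Both assertions are nontrivial first-variation statements about the glued metric at the seam --- essentially a linearized version of what is to be proved --- and cannot be assumed. Second, even granting that every tangent cone $C(\Sigma_pM)$ is an Alexandrov cone, having curvature $\geq1$ tangent cones at every point does not by itself yield the local comparison inequality on a neighborhood (infinitesimal lower curvature bounds do not integrate without additional uniform control), so the phrase ``forbids the convex corner at the infinitesimal level'' cannot substitute for the finite-scale argument. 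In short: correct identification of where the difficulty lies, but the difficulty itself is not resolved, and the one concrete device offered to resolve it (short unfolding maps into the doublings) is unsound.
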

\begin{remark} Generalizing Petrunin's theorem, in \cite{geli} the authors formulate a  conjecture for  gluing constructions of   multiple  Alexandrov spaces and prove this conjecture for a large class of spaces. 
\end{remark}

Theorem \ref{th:petrunin} applies in particular to the case of Riemannian manifolds $M_0$, $M_1$ with sectional curvature bounded from below and convex boundary, i.e. the second fundamental form $\mathrm{I\!I}_{\partial M_i}$, $i=0,1$, is positive semi-definite. 

 Kosovski proved the following generalization.

\begin{theorem}[\cite{kosovskiigluing, kosovski}] \label{th:basicglue} Let $M_0, M_1$ be Riemannian manifolds with boundary as above. Then $\mbox{sec}_{g_i}\geq \kappa\in \R$ and
 \begin{eqnarray}\label{condition:gluing} \mathrm{I\!I}_{\partial M_0}+ \mathrm{I\!I}_{\partial M_1}\geq 0\end{eqnarray}
 iff there exists a family of  Riemannian metrics $(g^\delta)_{\delta>0}$  on $\hat M$ such that 
\begin{enumerate}
\item $ g^\delta$ coincides with $ g$ outside of $B_\delta( Y_0 \simeq  Y_1)$, 
\smallskip
\item $ g^\delta$ converges uniformly to $ g$ as $\delta \downarrow 0$,
\smallskip
 \item  $\mbox{Sec}_{g^\delta}\geq \kappa-\epsilon(\delta)$ with $\epsilon(\delta)\rightarrow 0$ as $\delta \downarrow 0$.
\end{enumerate}
 In particular,  the  metric glued space $M_0\cup_{\mathcal I} M_1$ is an Alexandrov space with curvature bounded from below by $ \kappa$.
\end{theorem}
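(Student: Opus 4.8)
I would treat the two implications separately, with the bulk of the work in the forward direction; the ``in particular'' claim then follows formally. Throughout, fix a collar of $Y$ and introduce Fermi (boundary-normal) coordinates on each side, writing $s$ for the signed distance to $Y$, negative on the $M_0$-side and positive on the $M_1$-side. In these coordinates the $C^0$ metric becomes $g=ds^2+H(s)$, where $H(s)$ is a continuous one-parameter family of metrics on $Y$ with $H(0)$ the common boundary metric; continuity across $s=0$ is exactly the content of $\mathcal I$ being an intrinsic isometry. A direct computation identifies the second fundamental form of the level set $\{s\}$ with respect to $\partial_s$ as $-\tfrac12\partial_s H(s)$, whose one-sided limits at the seam are $-\II_{\partial M_0}$ and $+\II_{\partial M_1}$; hence $\partial_s H$ jumps by $-2\Delta$, where $\Delta:=\II_{\partial M_0}+\II_{\partial M_1}$. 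The hypothesis $\Delta\ge 0$ thus says precisely that $\partial_s H$ jumps downward as a symmetric bilinear form, i.e.\ that $\Delta$ is positive semi-definite.

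For the forward direction I would set $g^\delta=ds^2+H^\delta(s)$, with $H^\delta$ a mollification of $H$ in the $s$-variable supported in the $\delta$-collar and equal to $H$ for $|s|\ge\delta$. Properties (1) and (2) are then immediate: the modification is supported in $B_\delta(Y)$, and $\lVert H^\delta-H\rVert_\infty=O(\delta)$ because $\partial_s H$ is bounded on each side. The content is property (3), the pointwise bound $\sec_{g^\delta}\ge\kappa-\epsilon(\delta)$ over \emph{all} $2$-planes, which I would verify using the splitting $T=\R\partial_s\oplus TY$. For radial planes $\mathrm{span}(\partial_s,e)$ the curvature is $-\tfrac12\partial_s^2 H^\delta(e,e)$ plus bounded first-order terms; since $\partial_s H^\delta$ now decreases through the seam by the semi-definite amount $2\Delta$, one gets $\partial_s^2 H^\delta\le 0$ up to $O(1)$, producing a large \emph{positive} spike $\sim\tfrac1\delta\Delta(e,e)\ge 0$. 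For tangential planes $\mathrm{span}(e,e')\subset TY$ the Gauss equation gives $\sec_{g^\delta}=K_{(\{s\},H^\delta)}-Q(\II^\delta)$, where $Q$ is the $2\times2$ minor and $\II^\delta=-\tfrac12\partial_s H^\delta$ interpolates between $-\II_{\partial M_0}$ and $+\II_{\partial M_1}$; the map $\theta\mapsto Q(\II^\delta)$ is a parabola with second derivative $2Q(\Delta)\ge 0$, so it stays below its endpoint maximum, which the two one-sided Gauss identities bound by the available curvature surplus, yielding $\sec_{g^\delta}\ge\kappa-\epsilon(\delta)$. The same semi-definiteness $\Delta\ge0$ thus controls both the radial spike and the tangential parabola.

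The main obstacle is the remaining \emph{mixed} planes, spanned by $u=\cos\alpha\,\partial_s+\sin\alpha\,w$ and a tangential unit $v$. Here $\sec_{g^\delta}(u,v)$ equals $\cos^2\alpha$ times the positive radial spike $\sec_{g^\delta}(\partial_s,v)$, plus $\sin^2\alpha$ times the controlled tangential curvature, plus a cross term $2\cos\alpha\sin\alpha\,\langle R^\delta(\partial_s,v)v,w\rangle$. By the Codazzi equation the mixed component involves only $Y$-derivatives of $\II^\delta$, which stay $O(1)$ under the mollification and acquire no $\tfrac1\delta$ spike; one then absorbs the bounded cross term by completing the square against the $\tfrac1\delta$ radial spike, so its possible negativity costs only $O(\delta)$. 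Making this estimate uniform in $\alpha$, in the plane, and in the point of the collar is the delicate step. Granting property (3), the ``in particular'' assertion follows at once: $\lVert g^\delta-g\rVert_\infty\to 0$ forces $(\hat M,d_{g^\delta})\to(\hat M,\hat d)$ in the Gromov--Hausdorff sense, each $(\hat M,d_{g^\delta})$ is Alexandrov with curvature $\ge\kappa-\epsilon(\delta)$, and closedness of lower Alexandrov bounds under Gromov--Hausdorff limits gives curvature $\ge\kappa$ for $M_0\cup_{\mathcal I}M_1$.

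For the converse I would assume such a family $(g^\delta)$ exists. That $M_0\cup_{\mathcal I}M_1$ is Alexandrov with curvature $\ge\kappa$ follows exactly as above. At an interior point $p\in M_i\setminus\partial M_i$, for $\delta$ small enough $p$ lies outside $B_\delta(Y)$, so $\sec_{g_i}(p)=\sec_{g^\delta}(p)\ge\kappa-\epsilon(\delta)$; letting $\delta\downarrow 0$ gives $\sec_{g_i}\ge\kappa$. To recover $\Delta\ge 0$ I would argue at the level of the limit space rather than the particular $g^\delta$: at a seam point $p$ the space of directions $\Sigma_p(M_0\cup_{\mathcal I}M_1)$ must have curvature $\ge 1$, and it is obtained by gluing the two tangent half-spaces along $\Sigma_pY$, so the Alexandrov condition on this link is equivalent to the two-sided convexity $\Delta\ge 0$. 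Equivalently, were $\Delta(v,v)<0$ at some $p$ for some $v$, a transverse $2$-plane through $p$ would acquire a cone angle exceeding $2\pi$ at the seam, producing a short-cut that violates the $\md_\kappa$-comparison; this contradiction closes the equivalence.
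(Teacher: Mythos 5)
The paper itself does not prove Theorem \ref{th:basicglue} --- it is quoted from Kosovski \cite{kosovskiigluing, kosovski} --- so your proposal can only be judged on its own merits. Your skeleton (Fermi coordinates $g=ds^2+H(s)$, identification of the jump of $\partial_s H$ with $-2\Delta$ where $\Delta:=\II_{\partial M_0}+\II_{\partial M_1}$, smoothing in $s$, case split into radial/tangential/mixed planes, Gromov--Hausdorff stability for the ``in particular'') is the right one, and the radial and tangential cases are treated essentially correctly. The genuine gap is in the mixed-plane estimate, and it occurs exactly where the hypothesis is weakest: you propose to absorb the $O(1)$ cross term $2\cos\alpha\sin\alpha\,R^\delta(\partial_s,v,v,w)$ by completing the square against the radial spike $\tfrac{c}{\delta}\cos^2\alpha\,\Delta(v,v)$, but $\Delta$ is only positive \emph{semi}-definite, so $\Delta(v,v)$ may vanish (e.g.\ for the doubling along a totally geodesic boundary, $\Delta\equiv 0$); then there is no spike at all, and a bounded cross term of the wrong sign ruins the inequality for $\alpha$ near $\pi/4$ whenever the two diagonal entries sit only $o(1)$ above $\kappa$. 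The repair is of a different nature: for fixed $v$ the map $u\mapsto R_i(u,v,v,u)-\kappa|u\wedge v|^2$ is a nonnegative quadratic form on each side, so on each side the full $2\times 2$ block on $\mathrm{span}(\partial_s,w)$ (radial, mixed and tangential entries together) dominates $\kappa$ as a matrix; one must show the smoothed block is, up to $o(1)$, a convex combination of the two one-sided blocks plus a positive semi-definite correction, and conclude since convex combinations of matrices $\geq\kappa$ are $\geq\kappa$. Nothing gets ``absorbed into the spike''. (Minor and fixable: plain convolution does not give $g^\delta=g$ outside $B_\delta(Y)$ as required in (1); one needs an interpolation with cutoff rather than a mollifier.)

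In the converse direction, the recovery of $\sec_{g_i}\geq\kappa$ at interior points is fine, but your primary argument for $\Delta\geq 0$ is wrong: the tangent cone of $M_0\cup_{\mathcal I}M_1$ at a seam point is the gluing of two half-spaces of $\R^n$ along a hyperplane, hence isometric to $\R^n$, and the space of directions is the round $S^{n-1}$ \emph{regardless} of the second fundamental forms --- the blow-up destroys precisely the second-order information that $\Delta$ encodes, so the link cannot detect it. Your fallback picture (a transverse $2$-plane acquiring total angle $>2\pi$ at the seam when $\Delta(v,v)<0$) is the correct heuristic, but it must be turned into an actual comparison argument: exhibit a hinge or thin triangle straddling $Y$ near the offending point and direction whose excess violates the $\md_\kappa$-comparison that properties (2)--(3) force on $(\hat M,\hat d)$ via stability of lower curvature bounds under Gromov--Hausdorff convergence. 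As written, the necessity of \eqref{condition:gluing} is not established.
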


The construction for the metric $g^\delta$ in \cite{kosovski} was used by Schlichting in \cite{sch} to give a variant of the  proof of Theorem \ref{th:perelman}. More precisely, if $\ric_{g_i}\geq K$, the family $g^\delta$ satisfies $\ric_{g^\delta}\geq K-\epsilon$.

Schlichting applied the construction also to other curvature constraints, i.e. a lower bound for the curvature operator and a lower bound for the scalar curvature.  The Scalar curvature $\mbox{Sc}(p)\in \R$ in $p$ is defined as the sum of the sectional curvatures of all planes in $T_pM$. For the latter case one can replace the condition \eqref{condition:gluing} for the second fudnamental forms with the condition
$$H_{\partial M_0} + H_{\partial M_1}\geq 0$$ for the mean curvatures of $\partial M_0$ and $\partial M_1$. The results  on Scalar curvature were first established by Gromov and Lawson \cite{gromovlawson} and Miao \cite{miaolocalized}.

Theorem \ref{th:basicglue} found an interesting application in \cite{wong}. Given a Riemannian manifold $M$ with boundary $\partial M$ it is an important problem to find isometric, curvature-controlled extensions of $M$ beyond the boundary. Wong showed that, provided \begin{align}\label{assumptions}\mbox{ $\lambda^-\leq \II_{\partial M} \leq \lambda^+$, $\mbox{sec}_M\geq \kappa$ and $\diam_M\leq D$,}\end{align} there exists a uniform Riemannian extension $\tilde M$ of $M$ such that $\tilde M$ is an Alexandrov space with curvature $\geq \kappa$.
A corollary is that the class of Riemannian manifolds  with \eqref{assumptions} is precompact w.r.t. the Gromov-Hausdorff topology. 

Other applications of this extension result concern the properties of Riemannian manifolds with boundary and collapsing inradius \cite{yamaguchizhang}. 

\section{Intermediate curvatures}
If $(M,g)$ is a Riemannian manifold of dimension $n$ and $k\in \{1, \dots, n-1\}$, the positive $k$th-intermediate Ricci curvature, $\ric_k>0$, is defined through
$$\sum_{\alpha=1}^k \langle \mbox{Rm}(v, e_\alpha)e_\alpha, v\rangle>0$$
for all $v, e_1, \dots, e_k\in T_pM$ orthonormal and $p\in M$. 

The condition $\ric_k> 0$ interpolates between positive sectional curvature $\mbox{sec}_g>0$ ($k=1$) and positive Ricci curvature $\ric_{g}>0$ ($k=n-1$). There has been an increasing interest for these curvature notions in  recent years. Especially the question which manifolds admit Riemannian metrics with $\ric_k>0$ for some $k$ has become important since there is no topological obstruction known for the existence of such metrics  that does not already hold for $\ric_g>0$. 

Reiser and Wraith prove the following generalization of Perelman's gluing theorem. 
\begin{theorem}[\cite{rwnew}]\label{th:rw}
Let $M_0$, $M_1$ and $\mathcal I: \partial M_0 \rightarrow \partial M_1$ be as before. Suppose that $M_i$, $i=0,1$, have $\ric_k>0$ for $k\in \{1, \dots, n-1\}$ and that 
 $$\mathrm{I\!I}_{\partial M_0}+ \mathrm{I\!I}_{\partial M_1}\geq 0.$$
Then  the $C^0$ metric $g$ on $\hat M$ can be replaced by a smooth metric with $\ric_k>0$. Moreover, the  smoothed metric can be choosen to agree with $g$ outside of an arbitrarily small neighborhood of the gluing area. 
\end{theorem}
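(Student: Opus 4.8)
The plan is to smooth the $C^0$ metric $g$ only inside a collar of $Y$, exploiting the sign of the jump in the second fundamental form across $Y$. First I would introduce Fermi coordinates on each side: on a collar $Y\times[0,\epsilon)$ the metric reads $g_i=dr^2+b_i(r)$, where $r$ is the distance to $\partial M_i$ and $b_i(r)$ is a family of metrics on $Y$ with $b_0(0)=b_1(0)=:b(0)$ the common boundary metric. Gluing the two collars along $\{r=0\}$ produces a metric $g=dr^2+b(r)$ on $Y\times(-\epsilon,\epsilon)$ that is Lipschitz but not $C^1$: writing $S(r)=-\tfrac12 b(r)^{-1}b'(r)$ for the shape operator of the level set $\{r\}$ with respect to $\partial_r$, one computes $S(0^-)=-\II_{\partial M_0}$ and $S(0^+)=\II_{\partial M_1}$, so $S$ jumps upward by $\Delta:=\II_{\partial M_0}+\II_{\partial M_1}\geq 0$.

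The smoothing step replaces $b$ on $\{|r|<\delta\}$ by a smooth family $\tilde b_\delta$ whose level-set shape operator $\tilde S$ interpolates monotonically (in the Loewner order) from $S(-\delta)$ to $S(\delta)$, e.g. $\tilde S(r)=S(-\delta)+\phi_\delta(r)\,(S(\delta)-S(-\delta))$ with $\phi_\delta$ smooth, nondecreasing, equal to $0$ near $-\delta$ and $1$ near $\delta$; one then recovers $\tilde b_\delta$ by solving the linear ODE $\tilde b_\delta'=-2\tilde b_\delta\tilde S$ with matching data at $r=\pm\delta$. Since $S(\delta)-S(-\delta)\to\Delta\geq 0$, the interpolation can be chosen with $\tilde S'\geq 0$ up to an $O(\delta)$ error, while $\tilde S$ stays uniformly bounded and the tangential derivatives of $\tilde b_\delta$, hence the intrinsic geometry of the cross-sections $(Y,\tilde b_\delta(r))$, converge to those of $(Y,b(0))$. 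Thus $\tilde S'$ becomes large and positive as $\delta\downarrow 0$ while everything else remains controlled.

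The curvature bookkeeping then splits according to whether $\partial_r$ enters the frame. Using $\sec_g(\partial_r,e)=\langle(\tilde S'-\tilde S^2)e,e\rangle$ and the Gauss equation $\sec_g(u,e)=\sec_{(Y,\tilde b)}(u,e)-[\langle\tilde S u,u\rangle\langle\tilde S e,e\rangle-\langle\tilde S u,e\rangle^2]$ for tangential $u,e$, one checks that the only terms of order $\delta^{-1}$ in $\ric_k(v;\Pi)=\sum_\alpha\sec_g(v,e_\alpha)$, with $\Pi=\mathrm{span}(e_1,\dots,e_k)$, are the contributions $\langle\tilde S' w_\alpha,w_\alpha\rangle$ with $w_\alpha=v^r e_\alpha^\top-e_\alpha^r v^\top\in TY$. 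Because $\tilde S'\geq 0$, every such term is nonnegative, and it is strictly positive and large whenever $\mathrm{span}(v,\Pi)\not\subseteq TY$; hence any frame involving a radial component has $\ric_k>0$ for $\delta$ small. When $k=n-1$ this already covers all frames, recovering Perelman's theorem. The remaining, and decisive, case is a purely tangential frame $v,e_1,\dots,e_k\in TY$, where no $\tilde S'$ term appears and $\ric_k$ equals $\ric_k^{(Y,\tilde b)}-Q(\tilde S)$ with $Q(A):=\sum_\alpha[\langle Av,v\rangle\langle Ae_\alpha,e_\alpha\rangle-\langle Av,e_\alpha\rangle^2]$.

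This purely tangential case is where I expect the \emph{main obstacle} to lie, and it is where the hypothesis $\Delta\geq 0$ must be used a second time. The key is a convexity observation: along the linear path $\tilde S(s)=(1-s)S(-\delta)+sS(\delta)$, the quantity $Q(\tilde S(s))$ is a quadratic in $s$ whose $s^2$-coefficient is $Q(S(\delta)-S(-\delta))\to Q(\Delta)$, and writing $\Delta=T^2$ with $T\geq 0$ symmetric, Cauchy–Schwarz gives $Q(\Delta)=\sum_\alpha(|Tv|^2|Te_\alpha|^2-\langle Tv,Te_\alpha\rangle^2)\geq 0$. Hence $Q$ is convex along the interpolation and attains its maximum at the endpoints, so $Q(\tilde S)\leq\max\{Q(\II_{\partial M_0}),Q(\II_{\partial M_1})\}$, using $Q(-A)=Q(A)$. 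Since $\ric_k^{(Y,b(0))}(v;e)-Q(\II_{\partial M_i})=\ric_k^{M_i}(v;e)|_{\partial M_i}>0$ for $i=0,1$, the intrinsic term dominates and the purely tangential $\ric_k$ stays positive. As $\ric_k^{M_i}>0$ on a compact manifold gives a uniform gap $\ric_k^{M_i}\geq c>0$, choosing $\delta$ small enough absorbs all $O(\delta)$ errors and produces a genuine smooth metric $g^\delta=dr^2+\tilde b_\delta$ with $\ric_k>0$ that equals $g$ outside $\{|r|<\delta\}$. The most delicate technical points to verify carefully are the uniform control of the Codazzi (mixed) curvature terms and of the intrinsic curvatures $\ric_k^{(Y,\tilde b_\delta(r))}$ under the smoothing, together with the compatibility of the prescribed-$\tilde S$ construction with a $C^\infty$ matching to $g$ at $r=\pm\delta$.
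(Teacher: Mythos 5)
The survey states Theorem~\ref{th:rw} with a citation to Reiser--Wraith and gives no proof of its own, so there is no in-paper argument to compare against; the cited proof follows Perelman's two-step scheme (first deform a collar of $M_1$, keeping $\ric_k>0$, so that $\II_{\partial M_1}=-\II_{\partial M_0}$ and the glued metric becomes $C^{1,1}$, then mollify, using that $\ric_k>0$ is an open, fiberwise convex condition on the curvature tensor). Your one-step Fermi-coordinate smoothing is in the same general spirit, and your treatment of the purely tangential frames --- convexity of $Q$ along the interpolation, $Q(\Delta)\geq 0$ via $\Delta=T^2$ and Cauchy--Schwarz, and the comparison with $\ric_k^{M_i}$ through the Gauss equation --- is correct and is genuinely the algebraic heart of why $\II_{\partial M_0}+\II_{\partial M_1}\geq 0$ is the right hypothesis.

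There are, however, two genuine gaps. \emph{(1)} The ansatz $\tilde S(r)=S(-\delta)+\phi_\delta(r)\,(S(\delta)-S(-\delta))$ discards the background $r$-dependence of the shape operator. At points where $\phi_\delta'=0$ (which you impose near $r=\pm\delta$) your own formula gives $\sec_{g^\delta}(\partial_r,X)=-|\tilde S X|^2\leq 0$, so already for $k=1$ the construction fails unless the boundary is totally geodesic. More generally, for frames with a radial component the claimed ``large positive'' term $\phi_\delta'\langle \Delta w_\alpha,w_\alpha\rangle$ degenerates whenever $w_\alpha\in\ker\Delta$ --- and the hypothesis only gives $\Delta\geq 0$, so $\Delta$ may be degenerate or even identically zero. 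In that regime positivity must come from the $O(1)$ part of the curvature converging to the (positive) one-sided curvatures of $M_0$ and $M_1$, which a frozen-coefficient interpolation does not deliver; one needs something like $\tilde S(r)=(1-\phi)S_0(r)+\phi S_1(r)$ with the two extended shape-operator fields, or the bend-and-mollify route. \emph{(2)} The first-order ODE $\tilde b_\delta'=-2\tilde b_\delta\tilde S$ can be matched to $b$ at only one of the two endpoints $r=\pm\delta$, and even there only to first order (since $\tilde S'\neq S'$ nearby); so the resulting $g^\delta$ is at best $C^{1,1}$ across $r=\pm\delta$ rather than smooth, and the theorem's conclusion still requires exactly the smoothing lemma your outline omits. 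These are not merely ``delicate technical points'': as written, the construction produces a non-smooth metric whose curvature condition fails on open sets.
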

\begin{remark} 
Reiser and Wraith also prove a gluing theorem for the so-called $k$-positive Ricci curvature condition $S_k>0$. This condition interpolates between positive Ricci curvature and positive scalar curvature. More precisely, $S_k>0$ holds if and only if the Ricci tensor is $k$-positive on $T_pM$ for every $p\in M$, i.e. for every collection of $e_1, \dots, e_k$ of orthonormal vectors the sum $\sum_{\alpha=1}^k \ric_{M_i}(e_\alpha,  e_\alpha)$ is positive. The result says that under the condition $S_k>0$ for $M_i$, $i=0,1$, and if 
$\mathrm{I\!I}_{\partial M_0}+ \mathrm{I\!I}_{\partial M_1}$
is $k$-non-negative when $k<n-1$ ($(k-1)$-non-negative for the case $k=n-1, n$),  then the $C^0$ metric $g$ can be replaced by a smooth metric with $S_k>0$. 
\end{remark}
\begin{remark}
Examining the proof shows that  exactly the same arguments yield an analogous result for $\ric_{k}>K$ for $K\in \R$.
\end{remark}
\section{Curvature-dimension condition}
For $\kappa\in \mathbb{R}$ let $\sin_{\kappa}:[0,\infty)\rightarrow \mathbb{R}$ be the solution of 
$
v''+\kappa v=0, \ v(0)=0 \ \ \& \ \ v'(0)=1.
$

For $K\in \mathbb{R}$, $N\in (0,\infty)$ and $\theta> 0$ (the case $\theta=0$ is defined separately) we define the \textit{distortion coefficient} as
\begin{eqnarray}
t\in [0,1]\mapsto \sigma_{K,N}^{(t)}(\theta)=\begin{cases}
                                             \frac{\sin_{K/N}(t\theta)}{\sin_{K/N}(\theta)}\ &\mbox{ if } \theta\in [0,\pi_{K/N}),\\
                                             \infty\ & \ \mbox{otherwise}.
                                             \end{cases}
\end{eqnarray}
One sets $\sigma_{K,N}^{(t)}(0)=t$.
Moreover, for $K\in \mathbb{R}$, $N\in [1,\infty)$ and $\theta\geq 0$ the \textit{modified distortion coefficient} is defined as
\begin{eqnarray}
t\in [0,1]\mapsto \tau_{K,N}^{(t)}(\theta)=\begin{cases}
                                            \theta\cdot\infty \ & \mbox{ if }K>0\mbox{ and }N=1,\\
                                            t^{\frac{1}{N}}\left[\sigma_{K,N-1}^{(t)}(\theta)\right]^{1-\frac{1}{N}}\ & \mbox{ otherwise}.
                                           \end{cases}\end{eqnarray}

Let $(X,d)$ be a complete separable metric space equipped with a locally finite Borel measure $\m$. We call the triple $(X,d,\m)$ a metric measure space.

We denote with $\mathcal P(X)$ the space of Borel probability measures on $X$, and with $\mathcal P_b(X,\m)$ the space of $\m$-absolutely continuous probability measures with bounded support.

Let 
$$\mbox{Geo}(X)= \{\gamma: [0,1]\rightarrow X: \gamma \mbox{ is a constant speed geodesic}\}$$
equipped with the uniform distance and let $e_t: \mbox{Geo}(X)\rightarrow X$, $e_t(\gamma)=\gamma(t)$ be the evaluation map that is continuous. 
\smallskip

{The \textit{$N$-Renyi entropy} is defined by
$$
S_N(\cdot|\m):\mathcal{P}^2_b(X)\rightarrow (-\infty,0],\ \ S_N(\mu|\m)=\begin{cases}-\int_X \rho^{1-\frac{1}{N}}d\!\m& \ \mbox{ if $\mu=\rho\m$,  }\smallskip\\
0&\ \mbox{ otherwise}.
\end{cases}
$$}

\begin{definition}[\cite{stugeo2, lottvillani}]\label{def:cd}
A metric measure space $(X,d,\m)$ satisfies the \textit{curvature-dimension condition} $CD(K,N)$ for $K\in \mathbb{R}$, $N\in [1,\infty)$ if for every pair $\mu_0,\mu_1\in \mathcal{P}_b(X,\m)$ 
there exists a probability measure $\Pi\in \mathcal P(\mbox{Geo}(X))$ such that $(e_i)_\#\Pi= \mu_i$ for $i=0,1$ and $\forall t\in (0,1)$
\begin{eqnarray}\label{ineq:cd}
S_N(\mu_t|\m)\leq\! -\!\!\int \left[\tau_{K,N}^{(1-t)}(\theta)\rho_0(e_0(\gamma))^{-\frac{1}{N}}+\tau_{K,N}^{(t)}(\theta)\rho_1(e_1(\gamma))^{-\frac{1}{N}}\right]d\Pi(\gamma)
\end{eqnarray}
where $\mu_i=\rho_id\m$, $i=0,1$,  $\mu_t= (e_t)_\#\Pi$, and $\theta= L(\gamma)$.
\end{definition}
\begin{remark}
The curvature-dimension condition $CD(K,N)$ is preserved under measured Gromov-Hausdorff convergence. 
\end{remark}
\begin{remark} 
The condition yields that $(X,d,\m)$ satsifies a local {\it volume doubling} property for the measure $\m$. As a consequence a Gromov-Hausdorff tangent cone $X_p$ exists at every point $p\in X$. Moreover, a sequence of rescaled measures on $X$ converges weakly to a measure $\m_p$ on  $X_p$, and $(X_p, \m_p)$ satisfies the condition $CD(0,N)$. 
\end{remark}
\begin{corollary} Assume $M_0$ and $M_1$ satisfy $\ric_{M_i}\geq K >0$ and $\mathrm{I\!I}_{\partial M_0} + \mathrm{I\!I}_{\partial M_1}\geq 0$. Then the glued space $(M_0 \cup_{\mathcal I} M_1, \vol_g)$ satisfies the curvature-dimension condition $CD(K,n)$.
\end{corollary}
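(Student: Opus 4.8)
The plan is to exhibit $(M_0\cup_{\mathcal I}M_1,\vol_g)$ as a measured Gromov--Hausdorff limit of smooth Riemannian manifolds carrying a uniform lower Ricci bound, and then to pass the curvature-dimension condition to the limit by stability. Three ingredients enter: the smoothing construction of Kosovski and Schlichting recalled after Theorem~\ref{th:basicglue}; the fact that a smooth $n$-dimensional Riemannian manifold $(\hat M,h)$ with $\ric_h\geq K'$ satisfies $CD(K',n)$, which is one of the basic examples underlying Definition~\ref{def:cd} \cite{stugeo2,lottvillani}; and the stability of $CD(K,N)$ under measured Gromov--Hausdorff convergence stated above.

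First I would run the smoothing. Since $\ric_{M_i}\geq K$ for $i=0,1$ and $\II_{\partial M_0}+\II_{\partial M_1}\geq 0$, Schlichting's refinement of Kosovski's construction \cite{kosovski,sch} yields a family of smooth Riemannian metrics $(g^\delta)_{\delta>0}$ on $\hat M$ that agree with the $C^0$ metric $g$ outside $B_\delta(Y)$, converge uniformly to $g$ as $\delta\downarrow 0$, and satisfy $\ric_{g^\delta}\geq K-\epsilon(\delta)$ with $\epsilon(\delta)\to 0$. For each fixed $\delta$ the space $(\hat M,g^\delta)$ is a smooth compact Riemannian manifold, so $(\hat M,d_{g^\delta},\vol_{g^\delta})$ satisfies $CD(K-\epsilon(\delta),n)$.

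It remains to identify the limit. On the fixed compact manifold $\hat M$, uniform convergence of the positive-definite continuous metrics $g^\delta\to g$ gives, for small $\delta$, a two-sided comparison $(1-\eta(\delta))g\leq g^\delta\leq(1+\eta(\delta))g$ with $\eta(\delta)\to 0$; sandwiching lengths of admissible curves and taking infima yields $d_{g^\delta}\to\hat d$ uniformly, where $\hat d$ is precisely the distance induced by the length functional $L^g$. Moreover $g^\delta=g$ outside $B_\delta(Y)$ and $\vol_g(B_\delta(Y))\to 0$, so $\vol_{g^\delta}\to\vol_g$ in total variation, hence weakly. Consequently $(\hat M,d_{g^\delta},\vol_{g^\delta})\to(\hat M,\hat d,\vol_g)$ in the measured Gromov--Hausdorff topology. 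Fixing $\eta>0$ and using that $K-\epsilon(\delta)\geq K-\eta$ for $\delta$ small, stability yields $CD(K-\eta,n)$ for the limit; letting $\eta\downarrow 0$ and using the monotonicity of the coefficients $\tau_{K',n}^{(t)}(\theta)$ in $K'$ upgrades this to $CD(K,n)$ for $(M_0\cup_{\mathcal I}M_1,\vol_g)$.

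The main obstacle is the identification of the Gromov--Hausdorff limit: one must be sure that the smooth distances $d_{g^\delta}$ converge to the glued distance $\hat d$ and not to a strictly shorter intrinsic distance, which is exactly what the two-sided comparison above guarantees. The remaining points are bookkeeping: that $\hat d$ is a finite geodesic metric and $\vol_g$ a finite Borel measure, so that Definition~\ref{def:cd} applies on $(\hat M,\hat d,\vol_g)$; and that the passage $\eta\downarrow 0$ is legitimate, which follows since the defining inequality~\eqref{ineq:cd} is preserved under $K'\uparrow K$ in its lower bound.
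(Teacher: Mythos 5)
Your proposal is correct and follows essentially the same route as the paper: smooth the glued metric via the Kosovski--Schlichting construction to obtain $g^\delta$ with $\ric_{g^\delta}\geq K-\epsilon(\delta)$, invoke the smooth case of $CD(K-\epsilon,n)$, and pass to the limit by measured Gromov--Hausdorff stability. The extra care you take in identifying the limit distance with $\hat d$ and in the final step $K-\eta\uparrow K$ fills in details the paper leaves implicit, but the argument is the same.
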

\begin{proof}
The proof of Theorem \ref{th:basicglue} yields a family of Riemannian metrics $g^\delta$ on $\hat M$  with 1. and 2.  and such that $\ric_{g^\delta}\geq K-\epsilon$ for $\delta= \delta(\epsilon)\downarrow 0$ as $\epsilon\downarrow 0$.  Hence, $(\hat M, g^\delta)$ satisfies the curvature-dimension condition $CD(K-\epsilon,n)$. Moreover $g^\delta\overset{\scriptscriptstyle C^0}{\rightarrow} g$. Hence the corresponding metric spaces converge  in Gromov-Hausdorff sense. The Riemannian volume of $g^\delta$ converges weakly to the Riemannian volume of $g$. Hence, the sequence $(\hat M, g^\delta)$ converges in measured Gromov-Hausdorff sense to $(\hat M,g)$. Consequently $(\hat M, d_g, \vol_g)$ satisfies the condition $CD(K,n)$.
\end{proof}
\begin{theorem}[{\cite{palvs}, see also \cite[Appendix]{zh}}]\label{th:petrunincd}
Let $X$ be an $n$-dimensional Alexandrov space with curvature bounded  below by $\kappa$. 
Then $(X,d_X,\mathcal H^n_X)$ satisfies $CD(\kappa(n-1),n)$.
\end{theorem}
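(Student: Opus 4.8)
The plan is to reduce the statement to a pointwise Jacobian comparison along optimal transport geodesics, and then to extract that comparison intrinsically from the Alexandrov condition $\mathrm{sec}\geq\kappa$. Throughout set $K=\kappa(n-1)$ and $N=n$. The first thing I would record is the differentiable structure of $X$: the set $X_{\mathrm{reg}}$ of points whose tangent cone is isometric to $\R^n$ has full $\mathcal H^n$-measure (the singular set having Hausdorff dimension at most $n-1$), and on $X_{\mathrm{reg}}$ there is a $\mathrm{DC}$ coordinate atlas in which $\mathcal H^n$ is represented by a measurable Riemannian tensor. This is the substitute for smoothness that makes it possible to speak of a transport map and its Jacobian at all.

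For the transport step I would invoke Bertrand's theorem on optimal maps in Alexandrov spaces: for $\mu_0,\mu_1\in\mathcal P_b(X,\mathcal H^n)$ the $W_2$-optimal coupling is induced by a map $T(x)=\exp_x(\nabla\phi(x))$, with $\phi$ a locally semiconcave Kantorovich potential. Because Alexandrov spaces are nonbranching, the interpolation $T_t(x)=\gamma_x(t)$ along the transport geodesics $\gamma_x$ from $x$ to $T(x)$ is again a well-defined map for $t\in[0,1]$, the measures $\mu_t=(T_t)_\#\mu_0$ are $\mathcal H^n$-absolutely continuous with densities $\rho_t$, and $\rho_t(\gamma_x(t))\,J_t(x)=\rho_0(x)$ holds for the Jacobian $J_t(x)$ of $T_t$. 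Since $S_N(\mu_t|\mathcal H^n)=-\int\rho_t(\gamma(t))^{-1/n}\,d\Pi(\gamma)$ for the lift $\Pi$ of $T$, inequality \eqref{ineq:cd} follows by integration once one proves, for $\mu_0$-a.e. $x$ with $\theta=L(\gamma_x)$ and all $t\in(0,1)$,
\begin{equation}\label{eq:jacplan}
J_t(x)^{1/n}\ \geq\ \tau_{K,n}^{(1-t)}(\theta)+\tau_{K,n}^{(t)}(\theta)\,J_1(x)^{1/n}.
\end{equation}
Here the identity $K/(N-1)=\kappa$ is exactly what makes the coefficients in \eqref{eq:jacplan} built out of $\sin_\kappa$.

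The crux is \eqref{eq:jacplan}, an intrinsic Jacobi-field comparison. In the smooth model $J_t$ splits into a radial factor, growing linearly and producing the exponent $t^{1/n}$ inside $\tau$, and an $(n-1)$-dimensional transversal factor $\tilde J_t$; writing $h=\tilde J_t^{1/(n-1)}$ one has $h''\leq-\kappa\,\theta^2 h$ from the Riccati comparison, since $\mathrm{sec}\geq\kappa$ controls every $2$-plane spanned by $\dot\gamma$ and a transversal direction. It is precisely this per-plane sectional bound, stronger than the merely traced bound $\mathrm{ric}\geq\kappa(n-1)$, that yields the sharp coefficients $\tau_{K,n}$ with the topological dimension $N=n$. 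In the Alexandrov setting there is no classical Jacobi field theory, so I would replace it by Petrunin's second variation formula, which provides exactly the required second-order control on the spreading of nearby geodesics; combined with Alexandrov's theorem on the a.e. second differentiability of the semiconcave potential $\phi$ on the $\mathrm{DC}$ chart, this lets one run the Riccati comparison at $\mathcal H^n$-a.e. point. The main obstacle lives entirely here: one must justify the Jacobian ODE and its comparison in the a.e./distributional sense, ensure that the transport geodesics stay off the measure-zero singular set, and keep track of the semiconcavity so that the transversal Hessian comparison is available wherever the density is positive. The existence, uniqueness and semiconcavity of the optimal potential (Bertrand) and the nonbranching of $X$ are the clean prerequisites that make this feasible.

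Finally, because the coefficients $\tau_{K,n}(\theta)$ in \eqref{eq:jacplan} are built from the full length $\theta=L(\gamma_x)$, the a.e. comparison along each maximal transport geodesic already delivers the global inequality \eqref{ineq:cd}; optimal transport avoids cut loci by cyclical monotonicity, so the map and the comparison are defined along the whole geodesic. Should one prefer to argue locally first, establishing \eqref{eq:jacplan} only on small balls where $X$ is quantitatively close to Euclidean, one may instead conclude by the Bacher--Sturm local-to-global theorem, which applies since Alexandrov spaces are nonbranching, hence essentially nonbranching. Either route yields $CD(\kappa(n-1),n)$, as claimed.
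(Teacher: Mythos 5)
The paper itself gives no proof of Theorem \ref{th:petrunincd}: the result is quoted from Petrunin \cite{palvs} and the appendix of Zhang--Zhu \cite{zh}. Your outline follows exactly the strategy of those references: reduce $CD(\kappa(n-1),n)$ to the pointwise Jacobian inequality $J_t(x)^{1/n}\geq \tau_{K,n}^{(1-t)}(\theta)+\tau_{K,n}^{(t)}(\theta)\,J_1(x)^{1/n}$ along Bertrand's optimal transport maps, and deduce that inequality from a second-order comparison for the spreading of transport geodesics. The reduction itself (DC structure on the regular set, existence and a.e.\ uniqueness of the optimal map $T=\exp(\nabla\phi)$, nonbranching, the Monge--Amp\`ere identity $\rho_t(\gamma_x(t))J_t(x)=\rho_0(x)$, and the integration step producing \eqref{ineq:cd}) is correct and standard.

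The genuine gap is that the crux --- the Jacobian inequality --- is not proved but only attributed to ``Petrunin's second variation formula combined with a.e.\ second differentiability of $\phi$''. That derivation is the entire content of the cited works, and it is not a routine Riccati argument: in an Alexandrov space there is no exponential map, no Jacobi equation, and no canonical splitting of $J_t$ into a radial and a transversal factor, so the chain $h=\tilde J_t^{1/(n-1)}$, $h''\leq -\kappa\theta^2 h$ has no a priori meaning. Petrunin's second variation formula is an \emph{asymptotic} upper bound on $d(\gamma_0(t),\gamma_\epsilon(t))^2$ valid only along a subsequence of comparison constructions, and converting it into concavity of $t\mapsto J_t^{1/n}$ requires gradient exponential maps, quasigeodesics and a delicate approximation and limiting argument --- precisely the part Zhang--Zhu's appendix is devoted to for general $\kappa$. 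Two smaller inaccuracies: (i) with $N=n$ the coefficient $\tau_{K,n}$ already follows from the traced (Ricci) Riccati comparison via Cauchy--Schwarz, so the per-plane sectional bound is not what produces the sharp constant; it is simply the only bound synthetically available; (ii) the local-to-global route via Bacher--Sturm yields only the reduced condition $CD^*(K,N)$, and upgrading to $CD(K,N)$ needs the Cavalletti--Milman globalization theorem, which is far deeper than the statement being proved. As a roadmap to the literature your proposal is accurate; as a proof it defers exactly the step that constitutes the theorem.
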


Let $(M,g)$ be a connected Riemannian manifold with  boundary $\partial M$ and $d_g$  the induced intrinsic distance. Assume $M$ is equipped with  a measure  $\Phi  \vol_M$ where $\Phi\in C^{\infty}(M)$ and $\Phi> 0$.  We call  the triple $(M,g, \Phi)$ a weighted Riemannian manifold. 
Let $N\geq 1$.
If  $N>n=\dim_M$,  for $p\in  M$ and $v\in T_pM$ the Bakry-Emery $N$-Ricci tensor is
\begin{align*}
\ric_g^{\Phi, N}|_p(v,v)
= \ric_g|_p(v,v) - (N-n) \frac{ \nabla^2 \Phi^{\frac{1}{N-n}}|_p(v,v)}{\Phi^{\frac{1}{N-n}}(p)}.
\end{align*}
If $N=n$, then 
\begin{align*}
\ric_g^{\Phi, n}|_p(v,v) = \begin{cases} \ric_g|_p(v,v) - \nabla^2 \log \Phi |_p(v,v)
&\mbox{
if $g( \nabla \log \Phi _p, v)=0$}, \\
-\infty&\mbox{ otherwise.}
\end{cases}\end{align*} 
Finally, we set $\ric_g^{\Phi, N}\equiv-\infty$ if $1\leq N<n$. 
\begin{theorem}[\cite{cms, sturmrenesse, han19}]\label{th:smoothcd} Let $(M, g, \Phi)$ be a weighted Riemannian manifold with $\partial M\neq \emptyset$.
$(M, g, \Phi)$ satisfies $\ric^{\Phi, N}_g\geq K$ and $\mathrm{I\!I}_{\partial M}\geq 0$ if and only if the metric measure space $(M, d_g, \Phi \vol_M)$ satisfies the condition $CD(K,N)$.
\end{theorem}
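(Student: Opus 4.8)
The plan is to prove the two implications separately, and in each to disentangle the interior Bakry--\'Emery bound from the boundary convexity, since these are the two pieces of geometric data that the single condition $CD(K,N)$ encodes simultaneously.

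For the direction ``geometry $\Rightarrow CD(K,N)$'' I would run the optimal transport argument of \cite{cms}, in the weighted form of \cite{sturmrenesse}. Fix $\mu_0=\rho_0\m$ and $\mu_1=\rho_1\m$ in $\mathcal P_b(M,\m)$ with $\m=\Phi\vol_M$. By McCann's theorem there is a unique optimal plan, induced by a map $T_t(x)=\exp_x(t\nabla\varphi(x))$ for a $c$-concave Kantorovich potential $\varphi$, hence a unique displacement interpolation $\mu_t=(T_t)_\#\mu_0=\rho_t\m$. The computational core is the evolution of the Jacobian $\mathcal J_t$ of $T_t$ relative to $\m$: differentiating $\log\mathcal J_t$ along each transport geodesic and invoking the Riccati/Jacobi comparison, the hypothesis $\ric^{\Phi,N}_g\ge K$ is exactly what forces the $N$-th root of the Jacobian to satisfy the concavity estimate governed by $\sigma_{K,N}$ (equivalently $\tau_{K,N}$ after the dimensional normalization). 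Substituting $\rho_t(T_t(x))=\rho_0(x)/\mathcal J_t(x)$ and integrating against the optimal plan then produces \eqref{ineq:cd}. The boundary hypothesis $\II_{\partial M}\ge 0$ enters here only to guarantee that minimizing geodesics between interior points stay in the interior, so that the smooth Jacobi analysis applies with no boundary or focal corrections.

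For the converse I would localize $CD(K,N)$. Testing the condition on pairs of measures supported in shrinking balls around an interior point $p$, concentrated in a fixed direction $v$, and extracting the second-order term of \eqref{ineq:cd} as the scale tends to zero recovers the pointwise bound $\ric^{\Phi,N}_g|_p(v,v)\ge K$; this is the standard ``$CD$ implies curvature'' expansion and uses only interiority of $p$. The genuinely new ingredient, due to \cite{han19}, is reading off $\II_{\partial M}\ge 0$. Here I would argue by contradiction. If $\II_{\partial M}$ had a negative eigenvalue at some $p\in\partial M$ along a tangent direction $v$, then in boundary-adapted coordinates $M$ looks locally like $\{x_n\ge -c\,x_1^2+o(|x'|^2)\}$ with $c>0$, a non-convex region. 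Placing $\mu_0,\mu_1$ as small blobs around two points sitting symmetrically about $p$ on the far side of the bulge, the minimizing intrinsic geodesic joining them cannot run straight and must bend around the concave boundary, passing near $p$; this bending forces the interpolated density $\rho_t$ to concentrate near the midpoint. Concentration drives $S_N(\mu_t)$ up toward $0$, while the right-hand side of \eqref{ineq:cd} computed from the fixed, non-concentrated $\mu_0,\mu_1$ stays negative and bounded away from $0$ by some $-\eta<0$; hence the $CD(K,N)$ inequality fails, the desired contradiction.

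The main obstacle is precisely this boundary step. Convexity of $\partial M$ is invisible at first order: the Gromov--Hausdorff tangent cone at any smooth boundary point is a half-space regardless of the sign of $\II_{\partial M}$, so the condition must be detected through a finer, second-order comparison of the transport Jacobian in the grazing region. Making the contradiction quantitative---choosing the test measures and the scale so that the focusing produced by the concave boundary genuinely beats the lower bound encoded by $\tau_{K,N}$---is where the real analytic work lies, and constitutes the content of \cite{han19}.
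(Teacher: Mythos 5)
The paper does not actually prove this theorem: it is stated as a known result with citations to \cite{cms, sturmrenesse, han19}, so there is no in-paper proof to compare against. Measured against the arguments in those references, your outline is essentially the standard one and is correct in spirit: the forward direction is the Jacobian/Riccati comparison along the displacement interpolation, with $\II_{\partial M}\geq 0$ used to keep minimizing geodesics between interior points away from the boundary so that the smooth Jacobi analysis applies; the converse splits into an interior localization and a boundary obstruction.

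Two points deserve correction. First, your converse's interior step is understated: ``the standard second-order expansion'' recovers $\ric_g|_p(v,v)-(N-n)\nabla^2\Phi^{1/(N-n)}/\Phi^{1/(N-n)}\geq K$ only in the range $N>n$. The genuinely hard content of \cite{han19} is the measure-rigidity needed for the borderline and sub-critical cases: one must show that $CD(K,N)$ with $N<n$ is impossible, and that $CD(K,n)$ forces $\nabla\log\Phi=0$ (matching the $-\infty$ convention in the definition of $\ric_g^{\Phi,n}$ when $g(\nabla\log\Phi,v)\neq 0$). A naive Taylor expansion of \eqref{ineq:cd} does not see this; it requires tracking the singular behaviour of the distortion coefficients as $N\downarrow n$. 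You have misattributed \cite{han19} to the boundary step, whereas its role is this interior rigidity. Second, your boundary step is actually \emph{easier} than you fear, and your proposed quantitative comparison is the wrong mechanism. If $\II_{\partial M}(v,v)<0$ at $p$, minimizing geodesics between suitably placed interior points must run along the concave portion of $\partial M$ for a definite parameter interval, so the midpoints of the transport geodesics land \emph{on} the hypersurface $\partial M$, a $\m$-null set. Hence $\mu_{1/2}$ acquires a singular part (indeed can be made entirely singular), so $S_N(\mu_{1/2}|\m)=0$ by the very definition of the R\'enyi entropy used in Definition \ref{def:cd}, while the right-hand side of \eqref{ineq:cd} is bounded above by some $-\eta<0$ since $\mu_0,\mu_1$ have bounded densities and finite diameter of support. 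No second-order comparison of Jacobians in the grazing region is needed; the contradiction is qualitative. With these two repairs the proposal matches the proof assembled from the cited literature.
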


If $(M, \Phi)$ is a weighted Riemannian manifold with boundary for $\Phi\in C^{\infty}(M)$, the weighted mean curvature of $\partial M$ is defined via
$$H^{\Phi} = H_{\partial M} - \langle \nabla \log \Phi, \nu\rangle$$
where $\nu$ is the inward pointing unit normal vector field along $\partial M$. 

If we consider Riemannian manifolds $M_i$ as before together with smooth measures $\Phi_i\vol_{M_i}$ for $\Phi_i>0$ such that $\Phi_0= \Phi_1$ on $Y$, we can equip the glued space $M_0\cup_{\mathcal I} M_1$ with the tautological union of $\Phi_0$ and $\Phi_1$, i.e. 
$$\Phi(x)= \begin{cases} \Phi_0(x) & x\in M_0\\
\Phi_1(x)& x\in M_1.
\end{cases}$$
We obtain a measure $\Phi \vol_g$ on $\hat M$ where $\vol_g$ is the Riemannian metric associated to the $C^0$-metric $g$. 
\begin{theorem}[\cite{ketterergluing}]\label{th:ketterergluing}
The metric glued space $(M_0\cup_{\mathcal I} M_1, d_g)$ equipped with $\m= \Phi \vol_g$ satisfies a curvature-dimension condition $CD(K,N)$ for $K\in \R$ and $N\in [1, \infty)$ if and only if  $\ric_{M_i}^{\Phi, N}\geq K$, $i=0,1$ and
\smallskip
\begin{enumerate}
\item[(i)] $\mathrm{I\!I}_{\partial M_0} + \mathrm{I\!I}_{\partial M_1}  \geq 0$,
\medskip
\item[(ii)] $H^{\Phi_0}+ H^{\Phi_1}\geq  0$.
\end{enumerate}
\end{theorem}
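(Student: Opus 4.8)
The plan is to prove the two implications separately: sufficiency of $(i)$ and $(ii)$ by a smoothing-and-stability argument, and their necessity by a local analysis at the gluing hypersurface $Y$. For sufficiency I would produce, near $Y$, a one-parameter family of \emph{smooth} weighted Riemannian structures $(\hat M, g^\delta, \Phi^\delta)$ that coincide with $(g,\Phi)$ outside $B_\delta(Y)$, converge uniformly to $(g,\Phi)$ as $\delta\downarrow 0$, and satisfy $\ric^{\Phi^\delta,N}_{g^\delta}\geq K-\epsilon(\delta)$ with $\epsilon(\delta)\to 0$. This is the weighted analogue of the Kosovski-Schlichting construction behind Theorem \ref{th:basicglue} and the Corollary above: working in Fermi coordinates relative to $Y$, one mollifies the metric in the normal direction exactly as in the unweighted case, where condition $(i)$ controls the tangential shape-operator part of the regularized curvature, and one simultaneously mollifies $\log\Phi$, whose normal derivative jumps across $Y$ by $\nu_0\log\Phi_0+\nu_1\log\Phi_1$. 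Condition $(ii)$, i.e. $H^{\Phi_0}+H^{\Phi_1}\geq 0$, is precisely what keeps the normal component of the regularized $N$-Ricci tensor bounded below after the smoothing, since that component combines the mean-curvature jump with the weight jump. As $\hat M$ is closed, Theorem \ref{th:smoothcd} then gives $CD(K-\epsilon(\delta),N)$ for each $(\hat M,d_{g^\delta},\Phi^\delta\vol_{g^\delta})$; these converge in measured Gromov-Hausdorff sense to $(\hat M,d_g,\Phi\vol_g)$, and stability of the curvature-dimension condition yields $CD(K,N)$.

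For necessity I would first recover $\ric^{\Phi,N}_{M_i}\geq K$. Around any interior point of $M_i$ a small metric ball is geodesically convex and disjoint from $Y$, so the locality of $CD(K,N)$ for finite $N$, together with the smooth characterization of Theorem \ref{th:smoothcd} applied on an open manifold without boundary, forces $\ric^{\Phi,N}_{M_i}\geq K$ on the interior, hence everywhere on $M_i$ by continuity. It remains to extract $(i)$ and $(ii)$ from the validity of $CD(K,N)$ across $Y$, and this is the main obstacle. For the scalar condition $(ii)$ I would test the inequality \eqref{ineq:cd} on optimal dynamical plans whose geodesics cross $Y$, or equivalently invoke the one-dimensional localization of $CD(K,N)$: the conditional densities along transport geodesics must satisfy the one-dimensional $CD(K,N)$ differential inequality, including at the instant a needle meets $Y$. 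For needles crossing $Y$ transversally, the concavity forced at the crossing records both the jump of the metric Jacobian and the jump of the weight, and reading off its normal/trace part yields $(ii)$.

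The genuinely delicate point is the tensorial condition $(i)$, which cannot come from one-dimensional localization alone, since the conditional densities only see the radial (trace-type) Jacobian. As in the proof that boundary convexity $\II_{\partial M}\geq 0$ is necessary in Theorem \ref{th:smoothcd}, I expect $(i)$ to be produced by transports that graze $Y$ almost tangentially in a prescribed direction: because $\hat d$ restricted to $M_i$ can be strictly shorter than $d_{g_i}$, such grazing geodesics may refract across $Y$, so the relevant volume distortion is governed by the \emph{combined} form $\II_{\partial M_0}+\II_{\partial M_1}$, and the $CD$ inequality for these configurations degenerates unless that sum is nonnegative in the chosen direction. The technical heart is to make these near-tangential and near-normal test configurations rigorous and to disentangle the metric and weight contributions cleanly, so that one recovers the full tensorial inequality $(i)$ and the separate scalar inequality $(ii)$ rather than merely their combination.
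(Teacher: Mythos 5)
You should first be aware that the survey does not prove Theorem \ref{th:ketterergluing} at all: it is quoted from \cite{ketterergluing} without argument, so there is no in-paper proof to match your proposal against. The closest thing the text offers is the proof of the unweighted corollary after Definition \ref{def:cd} ($\Phi_i\equiv 1$, $\ric_{M_i}\geq K>0$), and that proof follows exactly your sufficiency route: Kosovski--Schlichting smoothing plus stability of $CD$ under measured Gromov--Hausdorff convergence. Your ``if'' direction is therefore a plausible extension of what the paper actually does, but its key step is asserted rather than carried out. What has to be checked is that for $v=a\nu+w$ with $w$ tangent to $Y$, the $O(1/\delta)$ singular part of $\ric^{\Phi^\delta,N}_{g^\delta}(v,v)$ equals $a^2\bigl(H^{\Phi_0}+H^{\Phi_1}\bigr)+\bigl(\II_{\partial M_0}+\II_{\partial M_1}\bigr)(w,w)$; i.e.\ that the singular Hessian of the mollified weight sits only in the normal--normal slot and is there cancelled by the mean-curvature jump of the mollified metric, while the mixed normal--tangential curvature components stay bounded. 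This is precisely where (i) and (ii) enter as \emph{separate} hypotheses (recall (ii) does not follow from (i) in the weighted case), and it is the actual content of the sufficiency proof; without it the claim that ``(ii) is precisely what keeps the normal component bounded below'' is a heuristic.

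The genuine gap is the ``only if'' direction, which is the novel content of the theorem (it is what upgrades Perelman's gluing statement to an equivalence), and your treatment of it is a description of what a proof would need rather than a proof. Two concrete problems. First, your appeal to one-dimensional localization presupposes that the glued space is essentially nonbranching; this is not automatic for a space merely assumed to satisfy $CD(K,N)$, nor for geodesics of a $C^0$ Riemannian metric, where branching at tangential contact with $Y$ is exactly the degenerate situation you want to exploit. You must either establish nonbranching for this geometry or test \eqref{ineq:cd} directly on explicitly constructed plans. Second, for (i) you correctly identify that trace-type data cannot suffice and that one needs transports grazing $Y$ in a direction $v$ with $\bigl(\II_{\partial M_0}+\II_{\partial M_1}\bigr)(v,v)<0$, but you give no construction of the marginals, no argument that the \emph{optimal} coupling actually selects the near-tangential geodesics that see this negativity (it may route around them, and such geodesics need not be unique or may meet $Y$ on a time set of positive measure), and no quantitative entropy defect contradicting \eqref{ineq:cd}. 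Until those configurations are made rigorous and the metric and weight contributions are separated, neither (i) nor (ii) is established as necessary, so the equivalence is not proved.
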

\begin{remark}
The bound (ii) is implied by (i) provided $$\langle \nabla\log \Phi_0, \nu_0\rangle + \langle \nabla \log \Phi_1, \nu_1\rangle \leq 0$$ where $\nu_0$ and $\nu_1$ are the inward pointing unit normal vector fields of $\partial M_0$ and $\partial M_1$, respectively. 
\end{remark}
\begin{remark}
Theorem \ref{th:ketterergluing} in particular applies to the unweighted case, i.e. $\Phi_0= \Phi_1=1$, $N=\dim_{M_i}$ and $\ric_{M_i}^{\Phi_i, N}=\ric_{M_i}$ and therefore improves Perelman's gluing theorem. 
\end{remark}
\begin{corollary}\label{cor:dbl} Let $(M, \Phi)$ be a weighted Riemannian manifold with boundary that  satisfies a curvature-dimension condition $CD(K,N)$ for $K\in \R$ and $N\in [1, \infty)$. Then
\begin{enumerate}
\item[] $H^{\Phi} \geq  0$ on $\partial M$
\end{enumerate} if and only if $\mbox{Dbl}(M, \Phi)$ satisfies $CD(K,N)$. 
\end{corollary}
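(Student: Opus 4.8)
The plan is to deduce the statement directly from Theorem \ref{th:ketterergluing} by specializing to the symmetric doubling situation. First I would observe that $\mbox{Dbl}(M,\Phi)$ is, by definition, the glued space $M_0\cup_{\mathcal I}M_1$ with $M_0\simeq M_1 = M$, $\mathcal I=\mbox{Id}_{\partial M}$ and $\Phi_0=\Phi_1=\Phi$. In this case the two structural hypotheses required by Theorem \ref{th:ketterergluing}, namely that $\mathcal I$ be a boundary isometry and that $\Phi_0=\Phi_1$ on $Y$, hold trivially. Hence Theorem \ref{th:ketterergluing} applies and tells us that $\mbox{Dbl}(M,\Phi)$ satisfies $CD(K,N)$ if and only if $\ric^{\Phi,N}_{M}\geq K$ together with the two boundary conditions (i) and (ii). In the symmetric case these collapse to one-sided inequalities: $\II_{\partial M_0}+\II_{\partial M_1}=2\,\II_{\partial M}\geq 0$ reduces to $\II_{\partial M}\geq 0$, and $H^{\Phi_0}+H^{\Phi_1}=2H^{\Phi}\geq 0$ reduces to $H^{\Phi}\geq 0$.

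Next I would invoke the standing hypothesis that $(M,\Phi)$ itself satisfies $CD(K,N)$. By Theorem \ref{th:smoothcd} this is equivalent to the conjunction of the interior Bakry-Emery bound $\ric^{\Phi,N}_{M}\geq K$ and the convexity condition $\II_{\partial M}\geq 0$. Consequently, of the three requirements produced by Theorem \ref{th:ketterergluing} in the previous step, the interior bound and condition (i) are automatically guaranteed by the hypothesis, and the only one not already implied is condition (ii), which in the doubling reads precisely $H^{\Phi}\geq 0$ on $\partial M$.

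Combining the two steps yields the equivalence in both directions. For the forward implication, if $H^{\Phi}\geq 0$ then all three conditions of Theorem \ref{th:ketterergluing} hold, so $\mbox{Dbl}(M,\Phi)$ satisfies $CD(K,N)$. For the converse, if $\mbox{Dbl}(M,\Phi)$ satisfies $CD(K,N)$ then Theorem \ref{th:ketterergluing} forces condition (ii), i.e.\ $2H^{\Phi}\geq 0$, whence $H^{\Phi}\geq 0$. This closes the argument.

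Since the proof is a direct specialization, I do not expect a genuine obstacle; the analytic difficulty is entirely contained in Theorem \ref{th:ketterergluing}, on which the corollary rests. The only points requiring care are bookkeeping ones: verifying that the hypotheses of Theorem \ref{th:ketterergluing} are met by the identity gluing, correctly halving the two-sided boundary inequalities into one-sided inequalities in the symmetric setting, and using Theorem \ref{th:smoothcd} to translate the assumption ``$(M,\Phi)$ satisfies $CD(K,N)$'' into the interior bound together with $\II_{\partial M}\geq 0$ so that exactly the weighted mean curvature condition remains.
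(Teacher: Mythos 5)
Your proposal is correct and follows exactly the route the paper intends: the corollary is stated without a written proof precisely because it is the specialization of Theorem \ref{th:ketterergluing} to $M_0=M_1=M$, $\mathcal I=\mbox{Id}_{\partial M}$, with Theorem \ref{th:smoothcd} used to convert the standing $CD(K,N)$ hypothesis into $\ric^{\Phi,N}_M\geq K$ and $\II_{\partial M}\geq 0$, leaving $H^\Phi\geq 0$ as the only remaining condition. Your bookkeeping of the symmetric reductions $2\,\II_{\partial M}\geq 0$ and $2H^\Phi\geq 0$ is accurate.
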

\begin{remark}
It is elementary to check that $\langle \nabla \Phi, \nu\rangle\leq 0$ holds iff $\langle \nabla \Phi, v\rangle\leq 0$ for every inward pointing vector $v\in T_xM$ for $x\in \partial M$. 
\end{remark}
A consequence of Corollary 2 is the following gradient estimate between  the heat semi group with Dirichlet and the heat semi group with Neumann boundary conditions. 
\begin{corollary}[\cite{kakest, PS}]
Consider $M$ as in the previous corollary and let $P_t^N$ and $P_t^D$ be the heat semi group with Neumann and Dirichlet boundary conditions, respectively. Then it follows
$$|\nabla P_t^D u|^2 \leq e^{-2Kt} P_t^N |\nabla u|^2 \ \ \forall u\in C^1(M).$$
\end{corollary}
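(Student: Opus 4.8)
The plan is to realize both semigroups through the intrinsic heat flow on the doubled space and to transplant the Bakry--\'Emery gradient estimate from there. By Corollary \ref{cor:dbl}, the hypotheses that $(M,\Phi)$ satisfy $CD(K,N)$ and $H^{\Phi}\geq 0$ on $\partial M$ guarantee that $\hat M:=\mbox{Dbl}(M,\Phi)$ satisfies $CD(K,N)$. Since $\hat M$ is, off the gluing locus $Y$ (a negligible set), a smooth weighted Riemannian manifold, its Cheeger energy is quadratic; hence $\hat M$ is infinitesimally Hilbertian and therefore $RCD(K,N)$, in particular $RCD(K,\infty)$. On such a space the heat semigroup $\hat P_t$ satisfies the pointwise Bakry--\'Emery estimate
\[
|\nabla \hat P_t f|^2\leq e^{-2Kt}\,\hat P_t|\nabla f|^2 \qquad \text{almost everywhere}
\]
for every $f$ in the domain of the Cheeger energy. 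This is the inequality I would push down to $M$.

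The second ingredient is the reflection symmetry. Let $\iota:\hat M\to\hat M$ be the isometric involution exchanging the two copies of $M$; it preserves the canonical measure and fixes $Y\simeq\partial M$ pointwise. Because the generator of $\hat P_t$ commutes with measure-preserving isometries, $\hat P_t$ commutes with the pullback $\iota^\ast$ and thus preserves the splitting of $L^2(\hat M)$ into $\iota$-even and $\iota$-odd functions. The key identification, which I would make precise at the level of the associated Dirichlet forms, is that restriction to one copy of $M$ matches the even part of $\hat P_t$ with the Neumann heat semigroup $P_t^N$ and the odd part with the Dirichlet heat semigroup $P_t^D$: an $\iota$-odd function must vanish on the fixed set $Y$, which is exactly the Dirichlet condition, while the even sector sees the natural Neumann condition.

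With these in hand the conclusion is a restriction argument. Given $u\in C^1(M)$, let $f$ be its $\iota$-odd extension to $\hat M$ (equal to $u$ on one copy and to $-u\circ\iota$ on the other). Then $\hat P_t f$ is $\iota$-odd and restricts on $M$ to $P_t^D u$, so $|\nabla \hat P_t f|^2=|\nabla P_t^D u|^2$ on $M$. On the other hand, since $\iota$ is an isometry the energy density $|\nabla f|^2$ is the $\iota$-even extension of $|\nabla u|^2$, whence $\hat P_t|\nabla f|^2$ restricts on $M$ to $P_t^N|\nabla u|^2$. Evaluating the Bakry--\'Emery estimate above on a single copy of $M$ therefore gives
\[
|\nabla P_t^D u|^2\leq e^{-2Kt}\,P_t^N|\nabla u|^2 \qquad \text{on } M,
\]
which is the claim; the passage from the almost-everywhere to the pointwise statement uses the smoothness of $\hat P_t f$ for $t>0$ on $\hat M\setminus Y$.

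I expect the main obstacle to be the second paragraph: rigorously matching the even and odd sectors of the intrinsically defined heat flow on the singular space $\hat M$ with the Neumann and Dirichlet flows on $M$. This means identifying the Cheeger energy of $\hat M$ restricted to even (resp.\ odd) functions with the Neumann (resp.\ Dirichlet) form of $(M,g,\Phi\vol_M)$ \emph{together with the correct form domains}, and checking that the odd extension of a $C^1$ function still lies in the domain of the Cheeger energy despite its jump across $Y$. A secondary technical point is the infinitesimal Hilbertianity and the pointwise Bakry--\'Emery inequality across the null set $Y$, both of which rely on $\hat M$ being a genuine weighted manifold away from a negligible set.
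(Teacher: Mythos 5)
Your overall route --- double the manifold, invoke Corollary \ref{cor:dbl} to get $CD(K,N)$ (hence, after checking infinitesimal Hilbertianity, $RCD(K,N)$) for $\mbox{Dbl}(M,\Phi)$, identify $P_t^D$ and $P_t^N$ with the odd and even sectors of the intrinsic heat flow $\hat P_t$ on the double, and restrict the Bakry--\'Emery gradient estimate to one copy of $M$ --- is exactly the strategy the paper has in mind: the corollary is presented as a consequence of Corollary \ref{cor:dbl}, and the even/odd identification is the mechanism of \cite{PS}. So the architecture matches.

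There is, however, one step that as written would fail, and which you flag as a worry but do not resolve. For $u\in C^1(M)$ with $u|_{\partial M}\neq 0$, the $\iota$-odd extension $f$ jumps across the gluing locus $Y$ and is \emph{not} in the domain of the Cheeger energy of the double: its distributional differential carries a singular part concentrated on $Y$, so $|\nabla f|^2$ is not the even extension of $|\nabla u|^2$, and the inequality $|\nabla \hat P_t f|^2\leq e^{-2Kt}\hat P_t|\nabla f|^2$ simply cannot be applied to this $f$. Your third paragraph therefore does not go through for general $u\in C^1(M)$. The standard repairs are: (a) first run the Dirichlet flow for a short time $s>0$, so that $P_s^D u$ vanishes on $\partial M$ and its odd extension is an honest $W^{1,2}$ function on the double, apply the estimate on the time interval $[s,t]$, and then let $s\downarrow 0$ --- where the convergence of the right-hand side $P_{t-s}^N|\nabla P_s^D u|^2\to P_t^N|\nabla u|^2$ is itself a nontrivial point; or (b) as in \cite{PS}, prove the dual statement first, namely a Wasserstein contraction estimate for the heat flow acting on signed ("charged") measures on the doubled space, and recover the gradient estimate by Kuwada-type duality, which avoids the domain issue entirely. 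One of these (or an equivalent device) is needed to close the argument; everything else in your proposal is consistent with the intended proof.
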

\begin{corollary}
The condition $\ric_k>0$ for the smoothed metric  $g^\delta$ on $\hat M$ in Theorem \ref{th:rw} implies that $M_0$ and $M_1$ satisfy $\ric_k>0$ and $\mathrm{I\!I}_{\partial M_0} + \mathrm{I\!I}_{\partial M_1}$ is nonnegative semidefinite. Hence, the latter condition is  sufficient and necessary  for preserving intermediate curvature bounds.
\end{corollary}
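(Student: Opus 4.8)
The plan is to prove the two asserted necessary conditions and then read off the equivalence, using throughout that by Theorem~\ref{th:rw} the metric $g^\delta$ coincides with $g$, and hence with $g_i$ on $M_i$, outside an arbitrarily small neighborhood of the gluing locus $Y$. Since $\ric_k$ is a pointwise second-order invariant, for each $p$ in the interior of $M_i$ one has $g^\delta=g_i$ near $p$ once $\delta$ is small enough, so $\ric_k^{g_i}(p)=\ric_k^{g^\delta}(p)>0$; letting the neighborhood of $Y$ shrink exhausts $\mathrm{int}(M_i)$, giving $\ric_k^{g_i}>0$ there (and $\ge 0$ up to $\partial M_i$ by continuity), which is exactly the curvature hypothesis of Theorem~\ref{th:rw}.

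The heart of the matter is the necessity of $\II_{\partial M_0}+\II_{\partial M_1}\ge 0$, which I would prove by contradiction. Suppose there are $p\in Y$ and a unit $v\in T_pY$ with $(\II_{\partial M_0}+\II_{\partial M_1})(v,v)<0$. Write the glued metric near $Y$ in Fermi form $dr^2+h_r$, where $r$ is the signed distance to $Y$, negative on the $M_0$-side and positive on the $M_1$-side. Then $h_r$ is continuous at $r=0$, while its normal derivative jumps by $\partial_r h_r|_{0^+}-\partial_r h_r|_{0^-}=-2(\II_{\partial M_0}+\II_{\partial M_1})$. Smoothing distributes this jump over a collar of width comparable to $\delta$, so $\partial_r^2 h_r$ concentrates there with average of order $-2(\II_{\partial M_0}+\II_{\partial M_1})/\delta$, while $\partial_r h_r$, and hence the shape operator $S_r=\tfrac12 h_r^{-1}\partial_r h_r$ of the slices, stays bounded. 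By the radial Riccati relation $\sec(v\wedge\partial_r)=-\langle(\partial_r S_r+S_r^2)v,v\rangle/|v|^2$ this forces
\[
\sec(v\wedge\partial_r)=\frac{(\II_{\partial M_0}+\II_{\partial M_1})(v,v)}{\delta}+O(1)\longrightarrow-\infty,
\]
whereas every tangential sectional curvature $\sec(e_i\wedge e_j)$ with $e_i,e_j\in T_pY$ remains bounded uniformly in $\delta$ by the Gauss equation, since the intrinsic curvatures of $(Y,h_r)$ and the $S_r^2$-terms are controlled. This is exactly the collar computation from the forward direction of Theorem~\ref{th:rw} (in the Kosovski--Schlichting construction), now read with the unfavorable sign.

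It remains to turn the blow-up of one mixed curvature into a violation of $\ric_k$. I would use the frame adapted to $v$: take $w=v$, set $e_1=\partial_r$, and complete to an orthonormal $(k+1)$-frame by $e_2,\dots,e_k\in T_pY$, which is possible because $k\le n-1=\dim Y$. Then
\[
\ric_k^{g^\delta}(w;e_1,\dots,e_k)=\sec(v\wedge\partial_r)+\sum_{\alpha=2}^{k}\sec(v\wedge e_\alpha),
\]
in which the first term tends to $-\infty$ while the remaining $k-1$ terms stay bounded, their planes being tangent to $Y$. Hence $\ric_k^{g^\delta}<0$ somewhere in the collar for all small $\delta$, contradicting $\ric_k^{g^\delta}>0$; therefore $\II_{\partial M_0}+\II_{\partial M_1}\ge 0$. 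Together with the interior bound and the sufficiency provided by Theorem~\ref{th:rw}, this proves that $\ric_k>0$ on $M_0,M_1$ together with $\II_{\partial M_0}+\II_{\partial M_1}\ge 0$ is both necessary and sufficient for $g$ to be smoothable to a metric with $\ric_k>0$.

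I expect the main obstacle to be the rigorous justification of the collar estimate without redoing the whole smoothing: one must know that the chosen family $g^\delta$ spreads the jump of $\partial_r h_r$ over a collar of controlled width while keeping $\partial_r h_r$ and all tangential derivatives of $h_r$ bounded. These are precisely the quantitative bounds established in the construction underlying Theorem~\ref{th:rw}, where the same Riccati and Gauss computations are carried out to keep the curvature bounded below in the favorable case; the present argument only needs them with reversed sign.
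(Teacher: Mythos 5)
Your first step (locality of $g^\delta$ away from $Y$ gives $\ric_k>0$ on the interiors of $M_0,M_1$) is fine and is also what the paper implicitly relies on for that part of the claim. The genuine gap is in your proof of the necessity of $\II_{\partial M_0}+\II_{\partial M_1}\geq 0$. Theorem \ref{th:rw} hands you only that $g^\delta$ is smooth, satisfies $\ric_k>0$, agrees with $g$ outside an arbitrarily small neighborhood of $Y$, and converges to $g$ in $C^0$. Your collar argument needs much more: that $g^\delta$ is in Fermi form $dr^2+h_r$ adapted to $Y$, and, crucially, that at the point $r_0$ of the collar where $\sec(v\wedge\partial_r)$ blows up to $-\infty$ the remaining terms $\sec(v\wedge e_\alpha)$, $e_\alpha$ tangent to the slice, stay bounded. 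The blow-up itself is the robust part (since $S_r^2\geq 0$, the integrated Riccati identity forces the mixed curvature to be very negative somewhere once the jump of $\partial_r h_r$ is spread over a width-$\delta$ collar). But the boundedness of the tangential terms is exactly what $C^0$ convergence does not give: by the Gauss equation these involve the intrinsic curvature of $(Y,h_{r_0})$ and products of entries of the shape operator $S_{r_0}$, and for an arbitrary admissible smoothing both can blow up inside the collar with either sign, potentially compensating the negative mixed term in the sum defining $\ric_k$. Your closing paragraph concedes that these bounds must be imported from ``the construction underlying Theorem \ref{th:rw}''; as written the argument is therefore not a proof from the stated hypotheses but a sketch that would require redoing the Kosovski--Schlichting collar estimates, and even then it would only cover that particular smoothing family.

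The paper closes this gap by a completely soft argument that you do not use at all: $\ric_k>0$ implies $\ric_{g^\delta}>0$, hence by compactness of the closed manifold $\hat M$ there is $K_\delta>0$ with $\ric_{g^\delta}\geq K_\delta$, so $(\hat M, g^\delta,\vol_{g^\delta})$ satisfies $CD(K_\delta,n)$ and in particular $CD(0,n)$; the $C^0$ convergence $g^\delta\to g$ upgrades to measured Gromov--Hausdorff convergence to $(\hat M, d_g,\vol_g)$, so the glued space satisfies $CD(0,n)$ by stability; and the ``only if'' direction of Theorem \ref{th:ketterergluing} then yields $\II_{\partial M_0}+\II_{\partial M_1}\geq 0$ with no quantitative control on the smoothing whatsoever. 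If you want to keep a hard, self-contained argument you must either add the collar bounds as hypotheses or replace the pointwise Riccati/Gauss estimates by this stability route.
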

\begin{proof}
The condition $\ric_k>0$ for $g^\delta$ implies that $g^\delta$ has positive Ricci curvature, has no boundary and is compact. Hence, there exists $K_\delta>0$ such that $(\hat M, g^\delta)$ satisfies $CD(K_\delta, n)$. It follows, after extracting a subsequence with $\delta\downarrow 0$, that $(\hat M, g^\delta, \vol_{g^\delta})$ converges in measured Gromov-Hausdorff sense to $(\hat M, g, \vol_g)$. Hence this space satisfies the condition $CD(0, N)$. Finally Theorem \ref{th:ketterergluing} yields the conclusion. 
\end{proof}
\begin{question}
Does the limit that appears in the previous proof satisfy a synthetic form of $\ric_k\geq 0$? A  candidate for  a synthetic definition of $\ric_k\geq 0$ was introduced in \cite{ketterermondino}.
\end{question}
\section{Riemannian curvature-dimension condition}
\begin{definition}
We say a metric measure space  $(X,d,\m)$ satisfies the Riemannian curvature-dimension condition $RCD(K,N)$ if it satisfies the condition $CD(K,N)$ and there exists a Gromov-Hausdorff tangent cone at $\m$-almost every point isometric to $\mathbb R^k$ for some $k\in \mathbb N$. 
\end{definition}

\begin{remark}
Every $n$-dimensional Alexandrov space with curvature $\geq k$ and equipped with the Hausdorff volume as reference measure satisfies the condition $RCD(k(n-1),n)$. 
Indeed, Petrunin proved in \cite{palvs} that every $n$-dimensional Alexandrov space with curvature $\geq k$ is $CD(k(n-1),n)$. Moreover, it is well-known that almost every tangent cone is isometric to $\R^n$. Hence the condition $RCD(k(n-1),n)$ follows. 
\end{remark}
\begin{remark}
The Riemannian curvature-dimension condition rules out the class of non-Riemannian Finsler manifolds. 
\end{remark} The definition of $RCD$ spaces given above is motivated by Proposition 6.7 in \cite{Kap-Ket-18}.
For equivalent definitions and further properties of $RCD$ spaces we refer to \cite{agmr, agsheat, agsriemannian, agsbakryemery, cavmil, erbarkuwadasturm,  giglisplitting, giglistructure, giglinonsmooth, gmsstability, mondinonaber}. Here we just  mention that there exists a bilinear pairing 
$$(f,g)\in \lip(X) \mapsto \langle \nabla f, \nabla g \rangle \in L^\infty(X)$$
that plays the role of an inner product between gradient vector fields of Lipschitz functions.  $\lip(X)$ is the family of Lipschitz functions on $X$. 

\begin{remark} The Riemannian curvature-dimension condition is stable under measured Gromov-Hausdorff convergence.\end{remark}
\begin{definition}[\cite{GP-noncol}]
We call an $RCD(K,N)$ space $(X,d,\m)$ non-collapsed if $N\in \N$ and $\m=\mathcal H^N$, the $N$-dimensional Hausdorff measure. 
\end{definition}
If $(X,d,\m)$ is a non-collapsed $RCD(K,N)$ space then every tangent cone is a metric cone $C(Y)$ over some $RCD(N-2, N-1)$ space $Y$ \cite{ketterer2, DGi}. Moreover, there is a natural stratification 
$$\mathcal S^0\subset \mathcal S^1 \subset \dots \subset \mathcal S^{N-1} = \mathcal S= X\backslash \mathcal R$$
where
$$\mathcal R=\{x\in X: \exists\mbox{ a unique GH tangent cone isometric to }\R^N\}, $$
the set of regular points in $X$, and for $k\in \{0, \dots, N-1\}$
$$\mathcal S^k= \{x\in X: \mbox{ there is no GH tangent cone isometric to } Y\times \R^{k+1}\},$$
the set of singular points of order $k\in \N$. 

For a noncollapsed Gromov-Hausdorff Ricci limit of a sequence of closed Riemannian manifolds with no boundary it was shown in \cite{cheegercoldingI} that the top-dimensional singular set $\mathcal S^{N-1}\backslash \mathcal S^{N-2}$ is empty. In a general noncollapsed $RCD$ space this set may be non-empty. For instance, one can consider a Riemannian manifold $(M,g)$ with convex boundary and Ricci curvature bounded from below. For Alexandrov spaces the top dimensional singular stratum is strictly linked to the geometric boundary of the space. 

In \cite{bns} and \cite{Kap-Mon19} two different notions of boundary have been proposed.  In \cite{bns} the authors define 
$$\partial X= \overline{\mathcal S^{N-1}\backslash \mathcal S^{N-2}}, $$
the topological closure of $\mathcal S^{N-1}\backslash \mathcal S^{N-2}$.  

In \cite{Kap-Mon19} an alternative definition of boundary has been proposed, inspired by the one adopted for Alexandrov spaces. The authors of \cite{Kap-Mon19} define the boundary of $X$ through
$$\mathcal F X= \{x\in X: \exists\mbox{ a tangent cone }C(Z_x) \mbox{ s.t. } Z_x \mbox{ has nonempty boundary}\}.$$
Recursively the definition of boundary points reduces to the case of $RCD(0,1)$ spaces that are isometric to one dimensional manifolds thanks to a classification of such spaces given in \cite{Kit-Lak}.

 \begin{theorem}[\cite{bns}] 
Let $X$ be a noncollapsed $RCD(K,N)$ space. Then either $\partial X=\emptyset$, or $\mathcal F X\neq \emptyset$ and $\mathcal FX\subset \partial X$. 
\end{theorem}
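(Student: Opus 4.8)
The statement packages two independent assertions: \textbf{(A)} the inclusion $\mathcal{F}X\subseteq\partial X$ holds unconditionally, and \textbf{(B)} $\partial X\neq\emptyset$ forces $\mathcal{F}X\neq\emptyset$. These suffice: if $\partial X=\emptyset$ the first alternative of the theorem holds outright, while if $\partial X\neq\emptyset$ then (B) gives $\mathcal{F}X\neq\emptyset$ and (A) gives the inclusion, which is the second alternative. So the plan is to establish (A) and (B) separately. I would prove (A) by induction on the dimension $N$, the base case $N=1$ being the classification of $RCD(0,1)$ spaces as $1$-manifolds from \cite{Kit-Lak}, for which $\mathcal{F}X$ and $\partial X$ both reduce to the set of manifold endpoints, so the inclusion (in fact equality) is immediate.

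For (B), I would start from a point $x\in\mathcal{S}^{N-1}\setminus\mathcal{S}^{N-2}$, which exists whenever $\partial X=\overline{\mathcal{S}^{N-1}\setminus\mathcal{S}^{N-2}}\neq\emptyset$. By definition of the strata, $x$ admits a tangent cone isometric to $Y\times\R^{N-1}$ for some $1$-dimensional $RCD(0,1)$ space $Y$, but none isometric to $\R^N$. Since a tangent cone is a metric cone and hence scale-invariant, the factor $Y$ must itself be scale-invariant, so $Y\in\{\R,[0,\infty)\}$ by the classification in \cite{Kit-Lak}; regularity of $x$ is excluded, forcing $Y=[0,\infty)$. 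Thus some tangent cone at $x$ is the half-space $\R^{N-1}\times[0,\infty)=C(Z_x)$ whose cross-section $Z_x$ is the closed hemisphere $\mathbb{S}^{N-1}_+$, and the hemisphere has nonempty boundary (its equator) in the recursive sense. Hence $x\in\mathcal{F}X$ and (B) follows.

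For the inductive step of (A), let $x\in\mathcal{F}X$ and fix a tangent cone $C(Z_x)$ with $\mathcal{F}Z_x\neq\emptyset$, where $Z_x$ is a non-collapsed $RCD(N-2,N-1)$ space of dimension $N-1$. By the inductive hypothesis (Claim (A) in dimension $N-1$) we get $\mathcal{F}Z_x\subseteq\partial Z_x$, so $\partial Z_x\neq\emptyset$ and $Z_x$ carries a top-dimensional singular point $z$ with half-space tangent cone $\R^{N-2}\times[0,\infty)$. Moving along the ray through $z$, any point $(r,z)\in C(Z_x)$ with $r>0$ then has tangent cone $\R\times(\R^{N-2}\times[0,\infty))=\R^{N-1}\times[0,\infty)$, so $(r,z)\in\partial\big(C(Z_x)\big)$. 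Thus the tangent cone $C(Z_x)=\lim_i(X,r_i^{-1}d,x)$ possesses a boundary point $p=(r,z)$ at finite distance from the vertex.

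It remains to transfer this boundary point back to $X$, and here I would invoke the stability of the boundary under pointed non-collapsed measured Gromov--Hausdorff convergence established in \cite{bns}: since $p\in\partial\big(C(Z_x)\big)$ and $C(Z_x)=\lim_i(X,r_i^{-1}d,x)$, there exist boundary points $p_i$ of the rescaled spaces with $p_i\to p$. As rescaling leaves the boundary unchanged as a subset, $p_i\in\partial X$, and $p_i\to p$ in $(X,r_i^{-1}d)$ means $r_i^{-1}d(x,p_i)\to d_{C(Z_x)}(o,p)=r<\infty$ for the vertex $o$; hence $d(x,p_i)\to 0$, so $p_i\to x$ in $(X,d)$. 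Since $\partial X$ is closed we conclude $x\in\partial X$, completing (A). The hard part is precisely this last transfer: recognizing the top-dimensional stratum as a genuine boundary and ruling out a boundary that materializes only in the blow-up limit is the heart of \cite{bns}, whereas the reduction to (A) and (B), the half-space computation, and the induction are all comparatively formal.
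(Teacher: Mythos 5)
This theorem is stated in the survey purely as a quotation from \cite{bns}; the paper contains no proof of it, so there is no internal argument to compare yours against, and I can only assess your proposal on its merits and against the strategy of the source. Your decomposition into (A) $\mathcal F X\subseteq\partial X$ and (B) $\partial X\neq\emptyset\Rightarrow\mathcal F X\neq\emptyset$ is the right one, your proof of (B) (a point of $\mathcal S^{N-1}\setminus\mathcal S^{N-2}$ has a tangent cone splitting $\R^{N-1}$, whose one-dimensional factor must be a geodesic cone, hence $\R$ or $[0,\infty)$, with $\R$ excluded, so the tangent cone is the half-space $C(\mathbb S^{N-1}_+)$) is the standard characterization of the top stratum, and the inductive blow-up argument for (A) followed by the transfer via boundary stability is indeed how \cite{bns} derive the inclusion $\mathcal F X\subseteq\partial X$. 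So the proposal is essentially a faithful reconstruction of the source's argument rather than a new route.

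Two caveats. First, all of the analytic difficulty is outsourced to the boundary-stability theorem of \cite{bns} (Hausdorff convergence of $\partial X_i$ to $\partial X$ under noncollapsed convergence, applied here to the blow-up sequence $(X,r_i^{-1}d,x)\to C(Z_x)$), which you invoke as a black box; you acknowledge this, and it is not circular since that theorem is proved independently of the present statement, but it means your argument is a reduction, not an independent proof. Second, two steps should be made explicit: (i) when you conclude $(r,z)\in\partial(C(Z_x))$ from the existence of \emph{one} half-space tangent cone, you must rule out that some \emph{other} tangent cone at $(r,z)$ is $\R^N$ (tangent cones need not be unique); this follows from volume rigidity, since the density is independent of the tangent cone, equals $1/2$ for the half-space and equals $1$ exactly when a tangent cone is $\R^N$; and (ii) the claim that $\R\times T$ is a tangent cone of $C(Z_x)$ at a non-vertex point $(r,z)$ whenever $T$ is a tangent cone of $Z_x$ at $z$ is true but needs the usual rescaling computation for cone metrics. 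With these two points filled in, the argument is correct.
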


A natural question is about a generalization of Petrunin's gluing theorem for $RCD$ spaces. In \cite{kakest} the authors formulated the following conjecture. 

\begin{conjecture}\label{conj:gluericci}
For $i=0,1$ let $X_i$ be noncollapsed $RCD(K,n)$ spaces with nonempty boundary $\partial X_i$ {in the sense of \cite{GP-noncol}}.  Suppose that there exists an isometry $\mathcal I:\partial X_0\rightarrow \partial X_1$ w.r.t. the induced intrinsic metrics on $\partial X_0$ and $\partial X_1$.  Then the glued  metric measure space $(X_0\cup_{\mathcal I} X_1, \mathcal H^n_{X_0\cup_{\mathcal I} X_1})$ satisfies the condition $RCD(K,n)$. 
\end{conjecture}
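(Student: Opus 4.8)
The plan is to verify the two defining properties of $RCD(K,n)$ for the glued space $\hat X := X_0\cup_{\mathcal I} X_1$, equipped with its distance $\hat d$ and reference measure $\mathcal H^n$, separately: the curvature–dimension condition $CD(K,n)$ of Definition~\ref{def:cd}, and the infinitesimal Hilbertianity encoded in the Euclidean tangent-cone requirement. Write $Y\subset\hat X$ for the identified boundary $\partial X_0\simeq\partial X_1$. The decisive structural observation is that, by the noncollapsed boundary structure theory recalled above, $\mathcal H^{n-1}$–a.e.\ point of $Y$ is a top-dimensional boundary point of both $X_0$ and $X_1$, so its tangent cone in each piece is the half-space $\mathbb R^{n-1}\times[0,\infty)$; gluing two half-spaces along $\mathbb R^{n-1}$ produces $\mathbb R^n$. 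Thus the gluing converts $\mathcal H^{n-1}$–a.e.\ point of $Y$ into a regular interior point of $\hat X$, and whatever singular behaviour can survive is concentrated on the lower stratum $\mathcal S^{n-2}$ of the two boundaries. In particular $\mathcal H^n(Y)=0$, and $\mathcal H^n_{\hat X}$ restricts to $\vol_{X_i}$ on each side, so the reference measure $\m=\mathcal H^n$ is the expected one.

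The core is the $CD(K,n)$ inequality \eqref{ineq:cd}. Given $\mu_0,\mu_1\in\mathcal P_b(\hat X,\mathcal H^n)$ I would split the analysis according to whether the supporting optimal dynamical plan charges geodesics meeting $Y$. On the open pieces $X_i\setminus Y$ the inequality holds because each $X_i$ is $RCD(K,n)$, so the only new content concerns transport crossing $Y$. Here one uses that a $\hat d$–geodesic crossing $Y$ is, away from a measure-zero set of crossing times, the concatenation of a geodesic in $X_0$ reaching $Y$ and one in $X_1$ leaving it, and that $\hat X$ is essentially nonbranching (each factor is, and the a.e.\ half-space tangent structure on $Y$ forbids branching there). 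The crucial point is that the Jacobian of such a crossing geodesic acquires, at the interface, a correction governed by a synthetic ``sum of second fundamental forms'' $\II_{\partial X_0}+\II_{\partial X_1}$; this correction has the right sign because being $RCD(K,n)$ forces each boundary to be synthetically convex, exactly as in the smooth characterisation of Theorem~\ref{th:smoothcd} and its gluing counterpart Theorem~\ref{th:ketterergluing}. Making this precise where no classical second fundamental form exists is the heart of the matter.

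For the Riemannian upgrade I would argue that, since $\mathcal H^n(Y)=0$ and the Cheeger energy is an $\m$–integral of the squared minimal weak upper gradient, quadraticity of the energy on each side forces quadraticity globally; equivalently, the a.e.\ Euclidean tangent structure established in the first paragraph promotes the $CD$ space to an $RCD$ space. The only remaining preliminary is to check that $(\hat X,\hat d)$ is geodesic and that $\hat d|_{X_i}$ is compatible enough with $d_{X_i}$ near $Y$ for the transport argument to localize, which follows from the construction of $\hat d$ as in the Setup.

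The step I expect to be the main obstacle is the synthetic interface estimate in the second paragraph: extracting from the bare hypothesis $\ric_{X_i}\geq K$ (synthetically) a usable mean-curvature nonnegativity of its boundary, and controlling the density of crossing geodesics exactly on $Y$, including on the lower stratum $\mathcal S^{n-2}$, where the tangent cones are not half-spaces and nonbranching is delicate. Two routes seem plausible: (i) a heat-flow/Bochner argument, building the Dirichlet form on $\hat X$ by gluing the two Neumann forms and verifying the Bakry–Émery inequality with a boundary contribution on $Y$ of sign $H_{\partial X_0}+H_{\partial X_1}\geq 0$; or (ii) a one-dimensional localization reducing \eqref{ineq:cd} to densities along needles crossing $Y$. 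A smooth approximation via Theorem~\ref{th:ketterergluing} together with stability of $RCD$ under measured Gromov–Hausdorff convergence is tempting, but it seems insufficient, since a general noncollapsed $RCD(K,n)$ space need not be approximable by Riemannian manifolds with controlled convex boundary; a purely synthetic treatment of the interface therefore appears unavoidable.
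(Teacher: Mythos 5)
This statement is Conjecture~\ref{conj:gluericci}: the paper does not prove it, and as the survey reports it remains open. What the paper offers are confirmations in special cases only — Theorem~\ref{th:alexricci} (from \cite{kakest}) proves it when $X_0$ and $X_1$ are additionally Alexandrov spaces with curvature bounded below, and a further theorem handles the doubling of a noncollapsed Ricci limit of manifolds with convex boundary. Your text therefore cannot be compared against a proof in the paper; it can only be judged as a proposed strategy for an open problem, and as such it contains a genuine gap that you yourself partially identify.

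The gap is the entire second paragraph. You reduce $CD(K,n)$ for the glued space to a ``synthetic interface estimate'' for Wasserstein geodesics crossing $Y$, asserting that the Jacobian acquires a correction of the sign of $\mathrm{I\!I}_{\partial X_0}+\mathrm{I\!I}_{\partial X_1}$ and that this is nonnegative because being $RCD(K,n)$ forces each boundary to be synthetically convex. Neither half of this is available: there is no synthetic second fundamental form at a general boundary point of a noncollapsed $RCD$ space (at best a distributional mean-curvature bound via $\Delta d_{\partial X}$, as in the later sections of the paper), and there is no Jacobian calculus for geodesics hitting the interface, in particular at points of $\mathcal S^{n-2}$ where tangent cones are not half-spaces and essential nonbranching of $\hat X$ is unproven. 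Secondary steps are also unjustified: that the tangent cone of $\hat X$ at $\mathcal H^{n-1}$-a.e.\ point of $Y$ is the gluing of the two half-space tangent cones requires blow-up to commute with the gluing, which is not automatic since $\mathcal I$ is only an isometry of the intrinsic boundary metrics; and quadraticity of the Cheeger energy does not simply localize to the two halves, because the minimal weak upper gradient of a function on $\hat X$ near $Y$ is computed with respect to curves crossing $Y$ and is not determined side-by-side. You correctly note that smoothing via Theorem~\ref{th:ketterergluing} plus stability is unavailable in this generality, and you correctly locate where the difficulty lies; but what you have written is a research plan, not a proof, consistent with the statement being posed as a conjecture.
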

A simpler conjecture only concerns the doubling of an $RCD$ space with non-empty boundary. 

\begin{conjecture}\label{conjecture:dblricci}
Let $X$ be  a noncollapsed $RCD(K,n)$ space with nonempty boundary $\partial X$.  Then $\mbox{Dbl}(X)$ satisfies the condition $RCD(K,n)$. 
\end{conjecture}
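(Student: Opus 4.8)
The plan is to verify the two defining properties of $RCD(K,n)$ for $\mbox{Dbl}(X)$ separately, namely the curvature-dimension bound $CD(K,n)$ and infinitesimal Hilbertianity, keeping track of non-collapsedness along the way. The organising idea is to exploit the isometric reflection $\sigma$ of $\mbox{Dbl}(X)$ that interchanges the two copies and fixes $Y=\partial X$, so that every assertion on the double is reduced to one on $X$ together with a boundary reflection, exactly as in the smooth computation underlying Corollary \ref{cor:dbl}. Non-collapsedness is immediate: $Y$ is the top singular stratum of $X$ and hence $\mathcal H^n$-negligible, so the $n$-dimensional Hausdorff measure of the double restricts to $\mathcal H^n$ on each copy. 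Infinitesimal Hilbertianity should follow from the locality of the Cheeger energy: since $Y$ is $\mathcal H^n$-null, a $W^{1,2}$ function on the double is determined by its two restrictions (which must share a common trace on $Y$, a jump being energetically forbidden across a codimension-one set), the Cheeger energy splits as $\Ch(f)=\Ch_X(f_0)+\Ch_X(f_1)$, and quadraticity is inherited from that of $X$. Thus the whole problem reduces to establishing $CD(K,n)$ for $\mbox{Dbl}(X)$.

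For the curvature-dimension bound I would use $L^1$-localization: after checking that $\mbox{Dbl}(X)$ is essentially non-branching (interior rays inherit this from $X$, and branching can occur only along the $\mathcal H^n$-null set $Y$), it suffices to verify the one-dimensional $CD(K,n)$ inequality along the transport rays of an arbitrary $1$-Lipschitz $u$. These rays are of two kinds. A ray contained in a single copy satisfies the density inequality $\big(h^{1/(n-1)}\big)''+\tfrac{K}{n-1}h^{1/(n-1)}\le 0$ because $X$ is $CD(K,n)$ up to its boundary. A ray crossing $Y$ is, via $\sigma$, the symmetric double of a ray in $X$ that reaches $\partial X$; on each side the density inequality again holds in the interior, and the only new contribution is a possible atom in the distributional second derivative at the crossing parameter. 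The sign of this atom equals that of the jump of $\big(h^{1/(n-1)}\big)'$ across $Y$, which by the reflection is twice the one-sided derivative of the density approaching $\partial X$ from the interior.

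The crux is therefore a single lemma: along transport rays reaching $\partial X$, the one-dimensional density is non-increasing towards the boundary, so that the boundary atom is non-positive and the $CD(K,n)$ inequality survives the crossing. This is the synthetic incarnation of the mean-convexity $H_{\partial X}\ge 0$ and is the exact analog of the elementary smooth chain ``$CD(K,n)$ with boundary $\Rightarrow \II_{\partial M}\ge0\Rightarrow H_{\partial M}\ge0$'' underlying Theorems \ref{th:smoothcd} and \ref{th:ketterergluing}. I expect this, rather than the transport bookkeeping, to be the main obstacle, for two reasons. First, it is genuinely global: mere concavity of $h^{1/(n-1)}$ along a ray does not prevent the density from increasing towards $\partial X$, so the required monotonicity must be extracted from the behavior of $X$ near $Y$ --- for instance from a Laplacian comparison for the distance to $\partial X$, or from the boundary structure and boundary-measure theory of \cite{bns, Kap-Mon19}. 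Second, mean curvature is a second-order quantity invisible to tangent cones (which are half-spaces at $\mathcal H^{n-1}$-a.e.\ boundary point), so it cannot be read off from the first-order stratification and must be captured through the second-order behavior of the reference measure.

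A complementary route to the same lemma, and one I would pursue in parallel, is Eulerian: identify the $\sigma$-even part of the heat flow on $\mbox{Dbl}(X)$ with the Neumann heat flow on $X$, as in \cite{kakest}, and read the required non-negativity off the boundary term produced by the Bochner formula on $X$, which is precisely $\int_{\partial X}H_{\partial X}|\nabla f|^2$ up to normalization. Either way, a successful resolution would establish the doubling Conjecture \ref{conjecture:dblricci}; the general gluing Conjecture \ref{conj:gluericci} would then remain harder precisely because the reflection $\sigma$ is no longer available, so that the two summands' second fundamental forms must be controlled independently rather than through a single symmetric density.
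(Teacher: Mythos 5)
The statement you are proving is Conjecture \ref{conjecture:dblricci}, which the paper explicitly leaves open: no proof is given there, only the remark that it holds under additional hypotheses (for Alexandrov spaces via Theorem \ref{th:alexricci} and \cite{kakest}, and for non-collapsed Ricci limits via Theorem 7.8 of \cite{bns}). Your text is accordingly a strategy outline rather than a proof, and you yourself flag the crux lemma as unresolved; so the proposal has a genuine gap by construction. Beyond that, several of the steps you treat as routine are not. First, essential non-branching of $\mbox{Dbl}(X)$ does not follow from ``$Y$ is $\mathcal H^n$-null'': essential non-branching concerns geodesics selected by optimal dynamical plans, and a plan can charge geodesics that branch upon hitting $Y$ even though $Y$ has measure zero. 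In the Alexandrov case this is resolved only because Perelman's doubling theorem makes $\mbox{Dbl}(X)$ an Alexandrov space, hence non-branching; no analogue is available here, and without it the $L^1$-localization/globalization machinery of \cite{cavmil} that you invoke does not apply. Second, your reduction of a crossing ray to ``the $\sigma$-symmetric double of a ray in $X$'' is false in general: a geodesic from $p$ in one copy to $q$ in the other is $\sigma$-symmetric only when $q=\sigma(p)$; generically it is a concatenation of two unrelated geodesics of $X$ meeting at $Y$, the two one-sided densities are different functions, and the atom is a difference of two unrelated one-sided derivatives rather than ``twice'' a single one.

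Third, the sign in your crux lemma is backwards. Mean convexity corresponds to the transport density being non-\emph{decreasing} toward the boundary: for the unit ball in $\R^n$ (convex, $H_{\partial}>0$) the radial density is proportional to $t^{n-1}$, increasing toward $\partial X$, and the doubled profile $\hat h^{1/(n-1)}$ is the concave tent function $\min(t,2-t)$; the jump of $(\hat h^{1/(n-1)})'$ at the crossing equals \emph{minus} twice the one-sided derivative taken toward the boundary, so non-positivity of the atom requires that one-sided derivative to be $\geq 0$. With your stated monotonicity (``non-increasing towards the boundary'') the atom would be non-negative and the inequality would fail -- your own example class, the annulus doubled across its inner boundary, shows this. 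Finally, even with the correct sign, the lemma itself -- a second-order statement about the reference measure at $\partial X$ that, as you rightly note, is invisible to tangent cones -- is precisely where the difficulty of the conjecture lives, and neither the Laplacian-comparison route nor the Neumann-heat-flow route you sketch is carried out. In short: the architecture (reflection symmetry, reduction to a synthetic mean-convexity statement at $Y$, comparison with Corollary \ref{cor:dbl}) is a sensible reading of the known special cases, but none of the three load-bearing steps (essential non-branching, structure of crossing geodesics, boundary density monotonicity) is established, so this does not resolve the conjecture.
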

The conjectures are true if we assume $X_0$ and $X_1$ ($X$ respectively) are Alexandrov spaces with curvature $\geq k$ for some $k\in \R$.

\begin{theorem}[\cite{kakest}]\label{th:alexricci}
For $i=0,1$ let $X_i$ be Alexandrov spaces with curvature bounded below, and let $\mathcal I:\partial X_0\rightarrow \partial X_1$ be an isometry. Assume the metric measure spaces $(X_i,d_{X_i},\mathcal H^n_{X_i})$ satisfy the condition  $RCD(K,N)$
for $K\in \R, N\in [1,\infty)$. 

Then the metric measure space $(X_0\cup_{\mathcal I} X_1, \mathcal H^n_{X_0\cup_{\mathcal I} X_1})$ satisfies the condition $
RCD(K,N)$.
\end{theorem}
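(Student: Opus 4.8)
The plan is to separate the two defining ingredients of $RCD(K,N)$ and dispatch the ``Riemannian'' part at once, so that everything reduces to the optimal-transport inequality $CD(K,N)$ for the glued space $\hat X:=X_0\cup_{\mathcal I}X_1$. First I would invoke Petrunin's gluing theorem (Theorem \ref{th:petrunin}): taking a common lower curvature bound $\kappa$ for $X_0$ and $X_1$, the space $\hat X$ is again a finite-dimensional Alexandrov space with curvature $\geq\kappa$. In particular $\hat X$ is geodesic and essentially nonbranching, and at $\mathcal H^n$-a.e.\ point its Gromov--Hausdorff tangent cone is isometric to $\R^n$. This last fact is exactly the infinitesimal-Hilbertianity input in the definition of $RCD$, so once $CD(K,N)$ is established the conclusion $RCD(K,N)$ follows for free. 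I would also record that $\mathcal H^n_{\hat X}$ restricts to $\mathcal H^n_{X_i}$ on each piece and that the interface $Y$ is $\mathcal H^n$-null, so the reference measures are compatible with the gluing. Note that the content is genuine only when the given bound on the pieces is strictly sharper than their Alexandrov bound; otherwise the conclusion is immediate from Theorem \ref{th:petrunincd}.

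For the $CD(K,N)$ inequality I would use the one-dimensional localization paradigm: on an essentially nonbranching space $CD(K,N)$ is equivalent to the statement that, for every $1$-Lipschitz $u$, the disintegration of $\mathcal H^n$ along the transport rays of $u$ yields conditional densities which, as functions of arc length, satisfy the classical one-dimensional $CD(K,N)$ condition (i.e.\ $(K,N)$-concavity of the appropriate power of the density). I would fix such a $u$ on $\hat X$ and split its transport rays into those that stay inside a single piece $X_i$ and those that cross the interface $Y$. For a ray contained in the interior of one piece the required one-dimensional inequality is inherited directly from the localization of the hypothesis $RCD(K,N)$ on $X_i$: there $\hat d$ agrees locally with $d_{X_i}$, the ray is a geodesic of $X_i$, and the conditional density coincides with the one coming from $\mathcal H^n_{X_i}$, so the bound transfers verbatim.

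The heart of the argument, and the step I expect to be the main obstacle, is the crossing lemma for rays meeting $Y$. Here one must show that the conditional density $h$, already a one-dimensional $CD(K,N)$ density on each side of a crossing point $y\in Y$, stays $(K,N)$-concave across $y$; equivalently, that at $y$ the one-sided arc-length derivatives obey the correct inequality, so that no upward corner is created. This is precisely where the geometry of the gluing enters: in the smooth setting it is governed by the second-fundamental-form balance $\II_{\partial X_0}+\II_{\partial X_1}\geq 0$ together with the mean-curvature condition of Theorem \ref{th:ketterergluing}. The decisive simplification in the Alexandrov category is that the boundary of a space with curvature $\geq\kappa$ is automatically convex in the synthetic sense, so the synthetic second fundamental form of each $\partial X_i$ is nonnegative and the balance holds with no extra hypothesis. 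I would make this quantitative by controlling the Jacobian of the exponential-type ray map near $Y$ in terms of the synthetic second fundamental form of the interface: convexity of $\partial X_i$ forces the conditional density to be nonincreasing as one moves off $Y$ into each piece, which is exactly the nonpositive derivative jump needed for $(K,N)$-concavity at $y$. The genuine technical difficulty is to carry this estimate out at the level of the abstract measure $\mathcal H^n$ and the transport rays, on the relevant full-measure set of crossing points, since $\partial X_i$ may itself be singular; controlling the conditional densities and the rays transverse to $Y$ there is the crux of the proof.
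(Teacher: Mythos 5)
Your overall architecture --- Petrunin's gluing theorem to see that $\hat X:=X_0\cup_{\mathcal I}X_1$ is again Alexandrov (hence essentially nonbranching with $\mathcal H^n$-a.e.\ tangent cone $\R^n$, which disposes of the ``Riemannian'' half of the definition), followed by a reduction of $CD(K,N)$ to a one-dimensional condition along transport rays and a crossing analysis at the interface $Y$ --- is the right one, and it mirrors the strategy of the proof in \cite{kakest}. Be aware that the survey itself offers no proof of this statement: it is recorded as the special case $\Phi_0=\Phi_1\equiv 1$ of the weighted gluing theorem stated immediately after it, for which the balancing condition \eqref{cond_boundary} is vacuous. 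Two points you should make explicit even in the outline: the equivalence of $CD(K,N)$ with its one-dimensional localization needs more than essential nonbranching to set up the disintegration (an a priori measure contraction or local $CD$ property, here supplied by Theorem \ref{th:petrunincd} applied to the glued Alexandrov space); and the claim that for a needle contained in one piece the conditional density ``transfers verbatim'' from $(X_i,d_{X_i},\mathcal H^n_{X_i})$ is not automatic, since a maximal transport ray of $u|_{X_i}$ in $X_i$ need not be the trace of a maximal ray of $u$ in $\hat X$, so the two disintegrations (and the normalizations of their densities) must actually be matched.

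The genuine gap is the crossing lemma, which you correctly single out as the crux but then support only with a heuristic that does not work as stated. You argue that convexity of $\partial X_i$ forces the conditional density to be nonincreasing as one moves off $Y$ into each piece. But the density $h$ along a needle is governed by the spreading of the level sets of $u$ transversal to that needle, and a needle generically crosses $Y$ at an arbitrary angle; $Y$ is not a level set of $u$, so the normal derivative of the density off $Y$ is not the quantity that controls the jump of the one-sided derivatives of $h$ at the crossing point. What controls it is a directional second-fundamental-form balance of the two boundaries evaluated along the intersection of the level set of $u$ with $Y$ --- exactly the role of condition (i) in Theorem \ref{th:ketterergluing} in the smooth case --- and for singular Alexandrov boundaries there is no classical $\II_{\partial X_i}$, so this must be replaced by first-variation and comparison estimates for distance functions; this replacement is the actual content of the proof and is missing here. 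In addition, you implicitly assume every needle meets $Y$ in isolated transversal points: needles that touch $Y$ tangentially, run inside $Y$ on a whole interval, or meet it infinitely often must be ruled out or shown to be negligible for the disintegration. Without these steps the argument does not close.
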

With Theorem 7.8 in \cite{bns} one can prove the Conjecture \ref{conjecture:dblricci} for  non-collapsed Ricci limit spaces. 
\begin{theorem}
If $X$ is the noncollapsed Gromov-Hausdorff limit of a sequence of smooth, $n$-dimensional Riemannian manifolds with convex boundary and Ricci curvature bounded from below by $K$, then $\mbox{Dbl}(X)$ satisfies the condition $RCD(K,n)$ and has no boundary. 
\end{theorem}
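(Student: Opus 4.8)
The plan is to exhibit $\mbox{Dbl}(X)$ as a noncollapsed measured Gromov-Hausdorff limit of smooth closed manifolds carrying a uniform lower Ricci bound, and then to read off both conclusions from the stability of the $RCD$ condition and from the no-boundary theorem for Ricci limits of closed manifolds. Let $(M_j,g_j)$ denote the approximating sequence, so each $M_j$ is a smooth compact $n$-manifold with $\ric_{g_j}\geq K$ and $\II_{\partial M_j}\geq 0$, and $M_j\to X$ noncollapsed; by stability $X$ is a noncollapsed $RCD(K,n)$ space. First I would double each $M_j$. As both glued pieces are copies of $M_j$, convexity gives $\II_{\partial M_0}+\II_{\partial M_1}=2\II_{\partial M_j}\geq 0$ and, since $H=\operatorname{tr}\II\geq 0$, also $H_{\partial M_0}+H_{\partial M_1}=2H_{\partial M_j}\geq 0$; thus hypotheses (i) and (ii) of Theorem \ref{th:ketterergluing} hold in the unweighted case $\Phi_j\equiv 1$, and $(\mbox{Dbl}(M_j),d_{g_j},\vol_{g_j})$ satisfies $CD(K,n)$. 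Since $\mbox{Dbl}(M_j)$ is a smooth manifold away from the $\vol_{g_j}$-negligible gluing locus $Y_j$, at $\vol_{g_j}$-almost every point the tangent cone is $\R^n$, so this glued space is in fact $RCD(K,n)$. Alternatively one may apply the Kosovski-Schlichting smoothing to obtain genuinely smooth metrics $g_j^{\delta}$ on the closed manifold $\mbox{Dbl}(M_j)$ with $\ric_{g_j^\delta}\geq K-\epsilon(\delta)$, which is the form most convenient for the final step.

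The heart of the argument is the convergence $\mbox{Dbl}(M_j)\to\mbox{Dbl}(X)$ in the measured Gromov-Hausdorff sense. The natural device is the isometric involution $\iota_j$ of $\mbox{Dbl}(M_j)$ interchanging the two copies of $M_j$: its fixed-point set is $Y_j\simeq\partial M_j$ and its quotient is $M_j$. Passing to a subsequential noncollapsed limit $Z$ of the (smoothed) doubles, the $\iota_j$ converge to an isometric involution $\iota$ of $Z$; I would then identify $Z/\iota$ with $X$ and the fixed-point set of $\iota$ with $\partial X$, which pins down $Z=\mbox{Dbl}(X)$. Making this identification rigorous is exactly where Theorem 7.8 of \cite{bns} is meant to enter: it supplies the boundary regularity and convergence for noncollapsed limits of manifolds with convex boundary, ensuring that $\partial M_j\to\partial X$ in the relevant sense and that the gluing passes to the limit. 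I expect this to be the main obstacle, since the doubling is defined through the \emph{intrinsic} metric of the boundary, which need not vary continuously under Gromov-Hausdorff convergence in the absence of such a structural input.

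With the convergence in hand, both conclusions are short. The $RCD(K,n)$ property of $\mbox{Dbl}(X)$ follows from the stability of the Riemannian curvature-dimension condition under measured Gromov-Hausdorff convergence, applied to $(\mbox{Dbl}(M_j),g_j^{\delta_j})$ along a diagonal choice $\delta_j\downarrow 0$ for which $\ric\geq K-\epsilon_j$ with $\epsilon_j\to0$; this proves Conjecture \ref{conjecture:dblricci} in the Ricci-limit case. For the absence of boundary, each smoothed double $\mbox{Dbl}(M_j)$ is a closed manifold without boundary, so the Cheeger-Colding result \cite{cheegercoldingI} cited above forces the top-dimensional singular stratum $\mathcal S^{n-1}\setminus\mathcal S^{n-2}$ of the limit to be empty; hence $\partial(\mbox{Dbl}(X))=\overline{\mathcal S^{n-1}\setminus\mathcal S^{n-2}}=\emptyset$ in the sense of \cite{GP-noncol}.
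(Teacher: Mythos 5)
Your proposal is correct and follows essentially the route the paper intends: the paper offers no written proof beyond the remark that the statement follows from Theorem 7.8 of \cite{bns}, and your argument --- double each $M_j$, apply the gluing/smoothing results to get $RCD(K,n)$ (respectively $\ric\geq K-\epsilon_j$) on the closed doubles, invoke the boundary-stability input of \cite{bns} to identify the limit of $\mbox{Dbl}(M_j)$ with $\mbox{Dbl}(X)$, and conclude by $RCD$ stability plus the Cheeger--Colding vanishing of $\mathcal S^{n-1}\setminus\mathcal S^{n-2}$ for limits of closed manifolds --- is exactly the intended fleshing-out. You also correctly locate the only nontrivial point (continuity of the gluing construction under GH limits via the intrinsic boundary metric) at the place where the citation is meant to do the work.
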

\begin{question} Does Conjecture \ref{conj:gluericci} hold for $X_0$ and $X_1$ that are noncollapsed Ricci limit spaces?
\end{question}
\paragraph{\bf Collapsed RCD space}
Theorem \ref{th:alexricci} is a corollary of a more general theorem for Alexandrov spaces equipped with a semiconcave weight function \cite{kakest}.

\begin{theorem}[\cite{kakest}]
For $i=0,1$ let $X_i$ be $n$-dimensional Alexandrov spaces with curvature bounded below and let $\m_{X_i}=\Phi_i \mathcal H^n_{X_i}$ be measures where $\Phi_i:X_i\rightarrow [0,\infty)$ are  semiconcave functions. Suppose that  there exists an isometry $\mathcal I:\partial X_0\rightarrow \partial X_1$ such that $\Phi_0=\Phi_1\circ \mathcal I$. 

If the metric measure spaces $(X_i, d_{X_i}, \m_i)$ satisfy the Riemannian curvature-dimension condition $RCD(K,N)$ for $K\in \R$, $N\in [1,\infty)$ and if 
\begin{align}\label{cond_boundary}{
\begin{matrix}
d_p\Phi_0(v_0)+d_{\mathcal I (p)}\Phi_1(v_1)\leq 0\smallskip\\
\mbox{ $\forall p\in \partial X_0$, $\forall$ normal vectors $v_0\in \Sigma_pX_0$, $v_1\in \Sigma_{\mathcal I(p)} X_1$},
\end{matrix}}
\end{align}
then the glued metric measure space $(X_0\cup_{\mathcal I} X_1, (\iota_0)_{\#}\m_{X_0}+(\iota_1)_{\#}\m_{X_1}))$ satisfies the Riemannian curvature-dimension condition $RCD(K,N)$. {$\iota_i: X_i\rightarrow X_0\cup_{\mathcal I}X_1$, $i=0,1$, are the canonical inclusion maps.}
\end{theorem}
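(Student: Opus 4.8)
The plan is to reduce the statement, via Petrunin's metric gluing theorem, to a one-dimensional question about conditional densities, the only genuinely new input being the behaviour of these densities where a transport geodesic crosses the gluing locus $Y$. Each $X_i$ has curvature bounded below by some $k_i$, so Theorem \ref{th:petrunin} applied to $X_0,X_1$ and $\mathcal I$ shows that $\hat X:=X_0\cup_{\mathcal I}X_1$ is an $n$-dimensional Alexandrov space with curvature $\geq k:=\min\{k_0,k_1\}$. In particular $\hat X$ is geodesic and non-branching (hence essentially non-branching for any measure), and $\mathcal H^n_{\hat X}$-a.e.\ point is regular with tangent cone $\R^n$. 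Writing $\Phi$ for the tautological union of $\Phi_0,\Phi_1$ and $\m:=(\iota_0)_\#\m_{X_0}+(\iota_1)_\#\m_{X_1}=\Phi\,\mathcal H^n_{\hat X}$, we have $\m\ll\mathcal H^n_{\hat X}$, so $\m$-a.e.\ point is regular as well. Hence the Euclidean-tangent-cone requirement in the definition of $RCD(K,N)$ is automatic, and it remains only to prove that $(\hat X,\hat d,\m)$ satisfies $CD(K,N)$.

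To verify $CD(K,N)$ I would use the localization (needle) characterization of the curvature-dimension condition on essentially non-branching spaces: it suffices to show that for every $1$-Lipschitz $u:\hat X\to\R$ the conditional measure of $\m$ along $\m$-a.e.\ transport ray of $u$ is a $CD(K,N)$ density on that geodesic. Fix such a ray $\gamma$, parametrized by arclength, and let $h$ be its conditional density (assume $N>1$; the degenerate case $N=1$ is handled separately). On the open portions of $\gamma$ lying in the interior of a single $X_i$ the space agrees with $(X_i,d_{X_i},\m_i)$, which is $CD(K,N)$; there $h^{\frac{1}{N-1}}$ satisfies the one-dimensional inequality $\bigl(h^{\frac{1}{N-1}}\bigr)''+\tfrac{K}{N-1}\,h^{\frac{1}{N-1}}\leq 0$, equivalently $(\log h)''+\tfrac{1}{N-1}((\log h)')^2+K\leq 0$. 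It therefore remains to control the distributional second derivative of $\log h$ at the (almost everywhere discrete, transversal) times where $\gamma$ crosses $Y$.

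At such a crossing $\log h=\log h_{\mathrm{geom}}+\log\Phi$ splits into a purely geometric Jacobian term and the weight, so the atomic part of $(\log h)''$ equals $[(\log h_{\mathrm{geom}})']+[(\log\Phi)']$, the sum of the jumps of the two logarithmic derivatives. The geometric jump is $\leq 0$: this is exactly what the metric gluing provides, since running the argument with $\Phi\equiv 1$ recovers the two-sided convexity of $Y$ encoded synthetically by $\II_{\partial X_0}+\II_{\partial X_1}\geq 0$, precisely as in Theorem \ref{th:alexricci}. For the weight jump, $\Phi_0=\Phi_1\circ\mathcal I$ forces the tangential derivatives along $Y$ to cancel, leaving a non-negative multiple of $d_p\Phi_0(v_0)+d_{\mathcal I(p)}\Phi_1(v_1)$, where $v_0,v_1$ are the normal directions of $\gamma$ at the crossing; this is $\leq 0$ by hypothesis \eqref{cond_boundary}. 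Consequently the atomic part of $(\log h)''$ is non-positive, the differential inequality holds distributionally on all of $\gamma$, and $h$ is a $CD(K,N)$ density. The needle criterion then yields $CD(K,N)$ for $(\hat X,\hat d,\m)$, and together with the previous step the full $RCD(K,N)$.

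The hard part is the rigorous interface analysis in the nonsmooth setting. One must show that $\m$-a.e.\ transport ray meets $Y$ only in a Lebesgue-null, transversal set of parameters --- ruling out rays that linger inside $Y$ --- and identify the precise jump structure of the conditional density there; and one must isolate the geometric Jacobian jump and establish its sign without recourse to a classical second fundamental form, which is where Petrunin's theorem (equivalently the unweighted Theorem \ref{th:alexricci}) is genuinely imported rather than reproved. Throughout, the semiconcavity of the $\Phi_i$ is what legitimizes the one-sided normal derivatives $d_p\Phi_i(v_i)$ and the second-order expansion of $\log h$, and is thus indispensable to the scheme.
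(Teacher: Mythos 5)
This survey states the theorem by citation and contains no proof of it, so the comparison has to be made against the argument in \cite{kakest}. Your overall skeleton is in fact the one used there: Petrunin's gluing theorem makes $X_0\cup_{\mathcal I}X_1$ an Alexandrov space (hence non-branching with $\mathcal H^n$-a.e.\ Euclidean tangent cone, so only $CD(K,N)$ needs proving), and the curvature-dimension condition is then verified through the one-dimensional localization of Cavalletti--Mondino/Cavalletti--Milman by controlling the conditional densities along needles, with the hypothesis \eqref{cond_boundary} entering exactly as the sign of the jump of the logarithmic derivative of the weight at a crossing of $Y$. The factorization $h=h_{\mathrm{geom}}\cdot(\Phi\circ\gamma)$ with $h_{\mathrm{geom}}$ a $CD(k(n-1),n)$ density (via Theorem \ref{th:petrunin} plus Theorem \ref{th:petrunincd}, which is the correct, non-circular source for the sign of the geometric jump --- do not quote Theorem \ref{th:alexricci} here, since in this survey that statement is a corollary of the theorem you are proving) is also the right device.

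That said, the two steps you defer are not routine bookkeeping but the actual content of the proof, and one of them is stated in a form that would fail as written. First, the claim that on $\gamma^{-1}(X_i^\circ)$ the density $h$ satisfies the one-dimensional $CD(K,N)$ inequality ``because the space agrees with $X_i$ there'' conflates a local isometry with a statement about disintegrations, which are global objects: one must show that the transport rays of $u$ in $\hat X$, intersected with $X_i$, sit inside transport rays of $u|_{X_i}$ and that the two conditional densities are proportional on the overlap, before the $CD^1(K,N)$ property of $(X_i,d_{X_i},\m_i)$ can be invoked with the correct constants $K$ and $N$ (rather than the weaker $k(n-1)$ and $n$ supplied by the Alexandrov bound). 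Second, at a crossing point the differential $d_p\Phi_i$ of a semiconcave function on an Alexandrov space is only a concave, positively homogeneous function on the tangent cone, not a linear one; concavity makes it \emph{super}additive, so the decomposition ``tangential parts cancel, leaving a non-negative multiple of $d_p\Phi_0(v_0)+d_{\mathcal I(p)}\Phi_1(v_1)$'' does not follow from splitting the incoming and outgoing directions into components --- one needs the first-variation/refraction law for geodesics of the glued space crossing $Y$ (equal angles, antipodal tangential components) together with a genuine one-sided estimate for $d_p\Phi_i$ against its tangential and normal traces, plus the a.e.\ transversality of needles to $Y$. These are precisely the lemmas that occupy the technical core of \cite{kakest}; without them the proposal is a correct plan rather than a proof.
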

\begin{question} What is a generalized form of the previous theorem for collapsed $RCD$ spaces?
\end{question}
Let us  consider  a weighted Riemannian manifold $(M, \Phi)$ with boundary $\partial M$ and $\Phi\in C^\infty(M)$. We assume $\ric^{\Phi, N}_M\geq K$ for $K\in \R$. 
We define the function $d_{\partial M} = \inf_{z\in \partial M} d_M(\cdot ,z)$ on $M\backslash \partial M$. 
 In \cite{ketterermean, bukemcwo} we proved the following. 
\begin{theorem}
The weighted mean curvature of $\partial M$ is non-negative if and only if 
$$\Delta d_{\partial M} \leq -\sqrt{K(N-1)} \frac{\sin\left( \sqrt{\frac{K}{N-1}} d_{\partial M}\right)}{\cos\left(\sqrt{\frac{K}{N-1}} d_{\partial M}\right)} \mbox{ on } M\backslash \partial M$$
provided $K>0$ and with an appropriate modification of the right hand side otherwise.  $\Delta$ is the distributional Laplace operator associated to $(M, \Phi)$. 
\end{theorem}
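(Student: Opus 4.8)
The plan is to reduce the statement to a one-dimensional Riccati comparison along the minimizing geodesics issuing perpendicularly from $\partial M$, and then to read off the boundary condition as the \emph{initial value} of that comparison. Let $\Delta$ denote the weighted Laplacian, so that on the smooth region $\Delta u = \Delta_g u + \langle\nabla\log\Phi,\nabla u\rangle$, where $\Delta_g$ is the Laplace--Beltrami operator of $g$. Fix $q\in\partial M$ and let $\gamma$ be the unit-speed geodesic with $\gamma(0)=q$, $\gamma'(0)=\nu$; in a collar neighborhood of $\partial M$ it realizes the distance to the boundary, $d_{\partial M}(\gamma(s))=s$, and $\nabla d_{\partial M}=\gamma'$ along it. The standard expansion of the distance-to-hypersurface function gives
\begin{equation*}
\lim_{s\to 0^+}\Delta d_{\partial M}(\gamma(s)) = -H_{\partial M}(q) + \langle\nabla\log\Phi,\nu\rangle(q) = -H^\Phi(q),
\end{equation*}
since $\nabla^2 d_{\partial M}|_{s=0} = -\II_{\partial M}$ and $H^\Phi = H_{\partial M} - \langle\nabla\log\Phi,\nu\rangle$. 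Thus $H^\Phi\ge 0$ is exactly the statement that the boundary value of $\Delta d_{\partial M}$ is nonpositive.

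First I would run the Bochner/Riccati computation for $m(s) := \Delta d_{\partial M}(\gamma(s))$. Writing $\Phi = e^{-f}$ and differentiating along $\gamma$, the Bochner formula gives
\begin{equation*}
m'(s) = -|\nabla^2 d_{\partial M}|^2 - \ric_g(\gamma',\gamma') - \nabla^2 f(\gamma',\gamma').
\end{equation*}
Substituting $\ric_g + \nabla^2 f = \ric_g^{\Phi,N} + \tfrac{1}{N-n}\,df\otimes df$, together with the weighted Cauchy--Schwarz inequality $|\nabla^2 d_{\partial M}|^2 + \tfrac{1}{N-n}(f')^2 \ge \tfrac{1}{N-1}(\Delta_g d_{\partial M} - f')^2 = \tfrac{1}{N-1}m^2$ (Titu's inequality with weights $n-1$ and $N-n$), and using $\ric_g^{\Phi,N}\ge K$, yields
\begin{equation*}
m'(s) \le -\tfrac{1}{N-1}\,m(s)^2 - K.
\end{equation*}
The model ODE $\bar m' = -\tfrac{1}{N-1}\bar m^2 - K$ with $\bar m(0)=0$ is solved by $\bar m = (N-1)(\log v)'$, where $v'' + \tfrac{K}{N-1}v=0$, $v(0)=1$, $v'(0)=0$; for $K>0$ this is $\bar m(s) = -\sqrt{K(N-1)}\,\tan(\sqrt{K/(N-1)}\,s)$, which along $\gamma$ (where $s = d_{\partial M}$) is exactly the right-hand side of the theorem, while $K=0$ gives $\bar m\equiv 0$ and $K<0$ the analogous hyperbolic expression (the ``appropriate modification'').

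Then I would assemble the equivalence. For the forward direction, $H^\Phi\ge 0$ gives $m(0)\le 0 = \bar m(0)$, and the Riccati comparison (with $w=m-\bar m$ one has $w' \le -\tfrac{m+\bar m}{N-1}w$ and $w(0)\le 0$, hence $w\le 0$) yields $m(s)\le\bar m(s)$ pointwise up to the cut distance. To pass to the global distributional inequality I would invoke that $d_{\partial M}$ is locally semiconcave on $M\setminus\partial M$: its distributional Laplacian is the sum of its a.e.\ pointwise value and a nonpositive singular measure supported on the cut locus, while $\langle\nabla\log\Phi,\nabla d_{\partial M}\rangle$ is bounded and affects only the absolutely continuous part; hence $\Delta d_{\partial M}\le\bar m$ holds as distributions. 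For the converse, in a collar $d_{\partial M}$ is smooth, so the distributional inequality is classical there; letting $s\to 0^+$ along each $\gamma$ and using $\bar m(s)\to 0$ forces $-H^\Phi(q)\le 0$ for every $q\in\partial M$.

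The main obstacle is precisely this distributional step: converting the pointwise Riccati bound, valid only before the cut distance, into a bound for the distributional Laplacian on all of $M\setminus\partial M$. This is where semiconcavity of $d_{\partial M}$---equivalently a Calabi-type barrier approximating $\partial M$ from the interior---is essential to guarantee that the cut-locus contribution carries the correct nonpositive sign. The degenerate cases are routine: for $1\le N<n$ the hypothesis $\ric_g^{\Phi,N}\ge K$ is vacuous, and for $N=n$ the same hypothesis forces $\langle\nabla\log\Phi,\gamma'\rangle=0$, so that the term $\tfrac{1}{N-n}(f')^2$ simply disappears.
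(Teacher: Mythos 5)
The survey states this theorem without proof, citing \cite{ketterermean} and \cite{bukemcwo}, so there is no in-paper argument to compare against; judged on its own terms, your proof is correct and is the natural argument in the smooth weighted setting. The boundary identity $\lim_{s\to 0^+}\Delta d_{\partial M}(\gamma(s))=-H^\Phi(q)$ is right with the paper's conventions ($\nu$ inward and $\II(v,w)=-\langle\nabla_v\nu,w\rangle$ give $\nabla^2 d_{\partial M}=-\II$ on $T\partial M$), the weighted Bochner/Riccati inequality $m'\le -\tfrac{1}{N-1}m^2-K$ via the Cauchy--Schwarz step with weights $n-1$ and $N-n$ is the standard derivation of the Bakry-Emery Laplacian comparison, and the passage from the pointwise bound along normal geodesics to the distributional bound via local semiconcavity of $d_{\partial M}$ (nonpositive singular part on the cut locus, with the drift $\langle\nabla\log\Phi,\nabla d_{\partial M}\rangle$ entering only the absolutely continuous part) is precisely the step needed to make the statement about the distributional Laplacian on all of $M\setminus\partial M$. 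By contrast, the cited sources establish this equivalence within a framework designed for nonsmooth $CD(K,N)$ spaces, using one-dimensional localization (needle decompositions) of $d_{\partial M}$ rather than Bochner's formula; that machinery buys generality and connects directly to the $RCD$ definitions appearing later in the survey, whereas your route is shorter and self-contained for weighted manifolds. Two minor points: for $N=n$ the hypothesis $\ric_g^{\Phi,n}\ge K$ forces $\nabla\log\Phi\equiv 0$ (not merely $\langle\nabla\log\Phi,\gamma'\rangle=0$), so the weighted case collapses to the unweighted one; and in the converse direction you should say explicitly that in the smooth collar the distributional inequality implies the classical pointwise one, which is what licenses taking $s\to 0^+$.
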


For a locally compact metric measure space $(X,d,m)$  the distributional Laplacian is defined as follows. Let $\lip_c(\Omega)$ be the set of Lipschitz functions with compact support in $\Omega\subset X$ where $\Omega$ is an open subset. 
A Lipschitz function $f$ is in the domain of the distributional Laplacian in $\Omega$ if there exists a signed Radon measure $\mu$ such that 
$$\int \langle \nabla f, \nabla g\rangle d\m = - \int g d\mu \ \ \ \forall g\in \lip_c(\Omega).$$
We write $\Delta  f \leq \phi$ if $\mu \leq \phi d \m$ on $\Omega$ in the sense of measures.
In particular, the distance function $d_{\Omega^c}$ to the complement $\Omega^c$ of an open set $\Omega$  is in the domain of the distributional Laplacian.  

\begin{definition} Let $K>0$.
Given an $RCD(K,N)$ space $X$ and $\Omega\subset X$, we say that $\partial \Omega$ has non-negative mean curvature if 
$$\Delta d_{\Omega^c} \leq  -\sqrt{K(N-1)}\frac{\sin\left( \sqrt{\frac{K}{N-1}} d_{\partial M}\right)}{\cos\left(\sqrt{\frac{K}{N-1}} d_{\partial M}\right)} \mbox{ on } \Omega.$$
If $X$ is $RCD(0,N)$, $\partial \Omega$ has non-negative mean curvature if and only if
$ d_{\Omega^c}$ is sub-harmonic on $\Omega$.
\end{definition}
\begin{example}
As an example consider a sequence of  compact Riemannian manifolds $M_i$ with $\ric_{M_i}\geq K$ converging to an $RCD(K,N)$ space $X$. Given open subsets $\Omega_i\subset M_i$ with smooth boundary such that the mean curvature $H_{\partial \Omega_i}$ is non-negative,  then the  sequence of $1$-Lipschitz functions $d_{\Omega_i^c}$ converges uniformly to the distance function $d_{\Omega^c}$ for an open set $\Omega$ in $X$. If $\Omega\neq \emptyset$, then $d_{\Omega^c}$ satisfies the Laplace bound above \cite{ketterermean}.
\end{example}

It is  possible to reformulate  Corollary \ref{cor:dbl} as follows.
\smallskip

\noindent
{\bf Corollary \ref{cor:dbl}} Let $(M, \Phi)$ be a weighted Riemannian manifold with boundary that  satisfies a curvature-dimension condition $CD(K,N)$ for $K\in \R$ and $N\in [1, \infty)$. Then
\begin{enumerate}
\item[] $\Delta d_{\partial M} \leq  -\sqrt{K(N-1)} \frac{\sin\left( \sqrt{\frac{K}{N-1}} d_{\partial M}\right)}{\cos\left(\sqrt{\frac{K}{N-1}} d_{\partial M}\right)} \mbox{ on } M\backslash \partial M$
\end{enumerate}
 if and only if $\mbox{Dbl}(M, \Phi)$ satisfies $CD(K,N)$. 
\smallskip

For a general (possibly collapsed) $RCD$ space a satisfactory notion of an intrinsically defined boundary has not been introduced yet.  On the other hand in \cite{kkk} it was shown that it is possible to define the boundary of a collapsed $RCD$ space $X$  if the associated metric space additionally satisfies an upper curvature bound in the sense of Alexandrov.  This is possible due to an  inductive principle that is similar to the  one used to define boundary  for Alexandrov  spaces and for noncollapsed $RCD$ spaces.

If $\Omega$ is a geodesically convex, open subset in some $RCD(K,N)$ space with $\m(\partial \Omega)=0$, then the closure $\overline \Omega$ of $\Omega$ is  an $RCD$ space. In this case we can also consider the topological boundary $\partial \Omega$ of $\overline \Omega$.

One can consider the doubling $\mbox{Dbl}(\overline \Omega)$ of $\overline \Omega$. Moreover, $\mbox{Dbl}(\overline \Omega)$ is equipped with the measure defined via
$$\hat \m(A) = \m(A\cap \Omega_0) + \m(A\cap \Omega_1) \ \ \forall A\subset \mbox{Dbl}(\overline \Omega)$$
where $\Omega_0$ and $\Omega_1$ are the two identical copies of $\Omega$ contained in $\mbox{Dbl}(\overline \Omega)$. 
\begin{conjecture}
Let $X$ be an $RCD(K,N)$ space and let $\Omega$ be a geodesically convex, open subset in $X$ with $\m(\partial \Omega)=0$. Then $\mbox{Dbl}(\overline \Omega)$ equipped with $\hat \m$ satisfies the condition $RCD(K,N)$ if and only if 
$$\Delta d_{\Omega^c} \leq  -\sqrt{K(N-1)} \frac{\sin\left( \sqrt{\frac{K}{N-1}} d_{\partial M}\right)}{\cos\left(\sqrt{\frac{K}{N-1}} d_{\partial M}\right)} \mbox{ on } \Omega.$$
\end{conjecture}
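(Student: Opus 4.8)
The plan is to separate the infinitesimally Hilbertian (Riemannian) part from the curvature-dimension part, and to attack the latter by one-dimensional localization adapted to the reflection symmetry of the doubling. Write $\sigma$ for the canonical isometric involution of $\mbox{Dbl}(\overline\Omega)$ exchanging the two copies $\Omega_0,\Omega_1$ and fixing the interface $Y=\partial\Omega$ pointwise; by construction $\hat\m$ is $\sigma$-invariant and $\hat\m(Y)=0$. Infinitesimal Hilbertianity of $(\mbox{Dbl}(\overline\Omega),\hat\m)$ should follow from that of $\overline\Omega$: since the two copies are glued along the $\hat\m$-negligible set $Y$, the Cheeger energy splits as the sum of the two copies' energies (with a linear matching condition along $Y$) and therefore remains a quadratic form. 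Thus it suffices to handle the $CD(K,N)$ condition, and the whole statement becomes an $RCD$ reincarnation of Corollary \ref{cor:dbl}.

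For the implication ``$\mbox{Dbl}(\overline\Omega)$ is $RCD(K,N)$ $\Rightarrow$ the Laplacian bound'', I would exploit that $Y$ is the fixed-point set of $\sigma$. Any unit-speed geodesic issuing from an interior point $x\in\Omega_0$ and meeting $Y$ orthogonally concatenates with its $\sigma$-reflection into a genuine minimizing geodesic from $x$ to $\sigma(x)$ of length $2\,d_{\Omega^c}(x)$. Disintegrating $\hat\m$ along such a transverse family of perpendicular needles, each one-dimensional density $h$ is even, $h(s)=h(-s)$, and must obey the one-dimensional $CD(K,N)$ inequality forced by the doubling; concretely $h^{1/(N-1)}$ must be ``$K$-concave''. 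An even density can satisfy the concavity across the kink at $s=0$ only if $h'(0^+)\le 0$. Since the logarithmic derivative $h'(0^+)/h(0^+)$ encodes the weighted mean curvature of the level sets of $d_{\Omega^c}$ emanating from $\partial\Omega$, this sign condition is exactly the non-negativity of the weighted mean curvature, equivalently the Laplacian comparison for $\Delta d_{\Omega^c}$ in the statement (the same Riccati comparison underlying the mean-curvature characterization recalled above).

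For the converse I would assume the Laplacian bound and verify $CD(K,N)$ on the doubling by $L^1$-localization. Given $\mu_0,\mu_1\in\mathcal P_b(\mbox{Dbl}(\overline\Omega),\hat\m)$, decompose the associated Kantorovich geodesic into one-dimensional needles. Needles contained in a single copy inherit the one-dimensional inequality from $\overline\Omega$ being $RCD(K,N)$, so the content lies in the needles crossing $Y$. Along such a needle the portion in $\Omega_1$ is the $\sigma$-image of a geodesic in $\Omega_0$, so the density is built from two $\overline\Omega$-needles joined at $Y$, and the hypothesis on $\Delta d_{\Omega^c}$ controls the jump of the density's logarithmic derivative at the crossing in the direction that preserves $K$-concavity. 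Reassembling the needles would then yield \eqref{ineq:cd}, and together with infinitesimal Hilbertianity this gives $RCD(K,N)$.

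The main obstacle is precisely this junction analysis for the \emph{crossing} needles. In the smooth model the matching of the two densities at $Y$ is transparent through the second fundamental form and the Riccati equation, and the bound enters through a single normal derivative; but in a general, possibly collapsed, $RCD$ space the boundary $\partial\Omega$ can be highly irregular (only convexity of $\Omega$ and $\hat\m(\partial\Omega)=0$ are assumed), and crossing needles need not meet $Y$ orthogonally. One must therefore show that the synthetic mean-curvature bound, which is naturally a statement in the \emph{normal} direction, still governs the density jump for \emph{oblique} crossings, and that $\hat\m$-almost every crossing needle is regular enough for the one-dimensional comparison to apply. This requires a structure theory for geodesics hitting the boundary of a convex subset of an $RCD$ space that is not yet available in the collapsed setting; establishing it, or an approximation of $(X,\Omega)$ preserving the mean-curvature bound, is where the real work lies.
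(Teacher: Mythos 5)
The statement you are addressing is stated in the paper as an open \emph{conjecture}: the paper offers no proof of it, so there is nothing to compare your argument against, and your proposal has to stand on its own. It does not: you yourself isolate the junction analysis for needles crossing the interface $Y=\partial\Omega$ as "where the real work lies," and that is indeed the entire mathematical content of the conjecture, not a technical afterthought. Beyond that self-acknowledged hole, there are two further gaps that would stop the argument even before one reaches the junction analysis. First, your converse direction verifies $CD(K,N)$ on $\mbox{Dbl}(\overline\Omega)$ by $L^1$-localization, but the Cavalletti--Mondino needle decomposition requires the space to be essentially nonbranching (and to have some a priori metric structure); for the doubling of a convex subset of a general, possibly collapsed, $RCD$ space this is not known in advance --- in the Alexandrov case treated in \cite{kakest} this input is supplied by Petrunin's gluing theorem, which has no analogue here, and assuming it is circular. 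Second, the claim that infinitesimal Hilbertianity (or, in the paper's formulation of $RCD$, the a.e.\ Euclidean tangent cone condition together with the identification of the Sobolev space) "splits as the sum of the two copies' energies" because $Y$ is $\hat\m$-negligible is asserted rather than proved; identifying the Cheeger energy of a glued space is genuinely delicate and occupies a substantial part of the existing gluing literature, since functions on the doubling need not restrict to Sobolev functions with matching traces without further structure on $Y$.

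Your forward direction is the more plausible half, and the heuristic linking the evenness of the needle densities to $h'(0^+)\le 0$ and hence to the mean-curvature inequality is the right intuition (it is the same Riccati picture behind Corollary \ref{cor:dbl} and the results of \cite{ketterermean, bukemcwo}). But even there you need that $\hat\m$-a.e.\ needle through a point of $\Omega_0$ realizing $d_{\Omega^c}$ extends by reflection to a minimizing geodesic in the doubling, and that the transverse disintegration of $\hat\m$ along the family of such perpendicular needles recovers $\Delta d_{\Omega^c}$ as a measure; both require a regularity theory for $\partial\Omega$ and for geodesics hitting it that, as you note, is unavailable in the collapsed setting. In short: the proposal is a reasonable research programme consistent with the techniques of \cite{kakest} and \cite{cavmil}, but it is not a proof, and the conjecture remains open.
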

\begin{example} 
Consider $X=[\epsilon, \pi-\epsilon]\subset \R$ equipped with the measure $\sin(r) dr$. This metric measure space satisfies $RCD(1,2)$ but its doubling does not satisfy the condition $RCD(1,2)$.

On the other hand, one can check that the doubling of the warped product $[0,\epsilon]\times_{\sin}{\scriptstyle\frac{1}{\sqrt{N-1}}}\mathbb S^2$ equipped with $\sin^N(r) dr|_{[0, \epsilon]} \otimes \vol_{\mathbb S^2}$ for $N> 2$  with $\epsilon \in (0, \frac{\pi}{2})$ satisfies the Riemannian curvature-dimension condition $RCD(N,N+1)$. Note that ${\scriptstyle \frac{1}{\sqrt{N-1}}}\mathbb S^2$ satisfies the condition $RCD(N-1,N)$ for $N> 2$. Then the warped product $[0,\epsilon]\times_{\sin}{\scriptstyle \frac{1}{\sqrt{N-1}}}\mathbb S^2$ is a geodesically convex subset in  $[0,\pi]\times_{\sin}{\scriptstyle \frac{1}{\sqrt{N-1}}}\mathbb S^2$ equipped with $\sin^N(r) dr\otimes \vol_{\mathbb S^2}$. The latter space satisfies $RCD(N, N+1)$ \cite{ketterer1} but it is not a smooth manifold.
\end{example}

\paragraph{\bf Measure contraction property}
The measure contraction is one of the weakest synthetic lower Ricci curvature bounds, introduced in \cite{ohtmea, stugeo2}. Given a metric measure space $(X,d,\m)$ the measure contraction property $MCP(0,N)$ for $N\geq 1$ holds if $\forall x\in X$ there exists a measurable map $\phi: X\rightarrow \mbox{Geo}(X)$ such that letting $\phi_t=e_t\circ \phi$, one has $\phi_0=x$ and $\phi_1=\mbox{id}_X$, and the following inequality holds for every measurable set $A\subset X$ with $0<\m(A)<\infty$:
$$\m(\phi_t(A))\geq t^N\m(A) \ \ \forall t\in [0,1].$$
Every $CD(0,N)$ space also satisfies the condition $MCP(0,N)$, but the reverse implication is not true in general. Sub-Riemannian geometries equipped with a suitable reference measure  do not satisfy a condition $CD(K,N)$ in general, but often do satisfy a measure contraction property $MCP(0,N')$ for  $N'>0$ large enough \cite{juillet1, juillet2}

In \cite{Ri17} Rizzi showed that the doubling of a metric measure space satisfying a measure contraction property does not preserve this measure contraction property. The counterexample is given by the Grushin half-plane which satisfies $MCP(0,N)$ if and only if $N\geq 4$. Its doubling is again a sub-Riemannian space but it satisfies $MCP(0,N)$ if and only if $N\geq 5$. 
\section{Acknowledgements}
This work was partially supported by the Research Institute for Mathematical Sciences, an International Joint Usage/Research Center located in Kyoto University, and by JSPS Grant-in Aid for Scientific Research (B)-JP21H00992.

The author wants to thank the Organizers of the German-Japanese University Network (HeKKSaGOn) for the invitation to contribute this article in an upcoming  special volume collecting the Proceedings of the meetings “Analysis, Geometry and Stochastics on
Metric Spaces” and “Metrics and Measures” that were held in RIMS and Tohoku on September
25-29, 2023. The  author is grateful to the Organizers of the meetings, for the kind
invitation and the generous hospitality. 

The author  wants to thank  Philipp Reiser for his  comments and for providing important references. 
The author is  very grateful to Wilderich Tuschmann for  his valuable remarks that helped to improved the final version of this article.

Finally the author thanks the unknown referee for reading carefully an early draft of this manuscript.

\end{document}